\numberwithin{equation}{section}
\numberwithin{figure}{section}
\theoremstyle{theorem}
\newtheorem{thm}{Theorem}
\newtheorem{prop}[thm]{Proposition}
\newtheorem{lem}[thm]{Lemma}
\theoremstyle{definition}
\newtheorem{remark}[thm]{Remark}
\tikzstyle{V}=[draw, fill =black, circle, inner sep=0pt, minimum size=2pt]
\begin{document}

\title{Centralizers of the infinite symmetric group}
\author{Zajj Daugherty and Peter Herbrich}
\address{Department of Mathematics, 6188 Kemeny Hall, Dartmouth College, Hanover, NH, 03755.} 
\email{zajj.b.daugherty@dartmouth.edu, peter.herbrich@dartmouth.edu}
\thanks{Author Z.\ Daugherty is partially supported by the National Science Foundation grant DMS-1162010} 
\date{\today}
\maketitle
\tableofcontents{}

\global\long\def\NC{\mathrm{NCSym}}
 \global\long\def\Norm#1{\Vert#1\Vert}
 \global\long\def\NormBig#1{\Bigl\Vert#1\Bigr\Vert}
 \global\long\def\Normbigg#1{\biggl\Vert#1\biggr\Vert}
 \global\long\def\AbsoluteValue#1{|#1|}
 \global\long\def\AbsoluteValueBig#1{\Bigl|#1\Bigr|}
 \global\long\def\AbsoluteValuebigg#1{\biggl|#1\biggr|}
 \global\long\def\BoundedOperators{\mathcal{B}}
 \global\long\def\cA{\mathcal{A}}
 \global\long\def\cB{\mathcal{B}}
 \global\long\def\cC{\mathcal{C}}
 \global\long\def\cD{\mathcal{D}}
 \global\long\def\cE{\mathcal{E}}
 \global\long\def\cF{\mathcal{F}}
 \global\long\def\cG{\mathcal{G}}
 \global\long\def\cH{\mathcal{H}}
 \global\long\def\cI{\mathcal{I}}
 \global\long\def\cJ{\mathcal{J}}
 \global\long\def\cK{\mathcal{K}}
 \global\long\def\cL{\mathcal{L}}
 \global\long\def\cM{\mathcal{M}}
 \global\long\def\cN{\mathcal{N}}
 \global\long\def\cO{\mathcal{O}}
 \global\long\def\cP{\mathcal{P}}
 \global\long\def\cQ{\mathcal{Q}}
 \global\long\def\cR{\mathcal{R}}
 \global\long\def\cS{\mathcal{S}}
 \global\long\def\cT{\mathcal{T}}
 \global\long\def\cU{\mathcal{U}}
 \global\long\def\cV{\mathcal{V}}
 \global\long\def\cW{\mathcal{W}}
 \global\long\def\cX{\mathcal{X}}
 \global\long\def\cY{\mathcal{Y}}
 \global\long\def\cZ{\mathcal{Z}}
 \global\long\def\AA{\mathbb{A}}
 \global\long\def\BB{\mathbb{B}}
 \global\long\def\CC{\mathbb{C}}
 \global\long\def\DD{\mathbb{D}}
 \global\long\def\EE{\mathbb{E}}
 \global\long\def\FF{\mathbb{F}}
 \global\long\def\GG{\mathbb{G}}
 \global\long\def\HH{\mathbb{H}}
 \global\long\def\II{\mathbb{I}}
 \global\long\def\JJ{\mathbb{J}}
 \global\long\def\KK{\mathbb{K}}
 \global\long\def\LL{\mathbb{L}}
 \global\long\def\MM{\mathbb{M}}
 \global\long\def\NN{\mathbb{N}}
 \global\long\def\OO{\mathbb{O}}
 \global\long\def\PP{\mathbb{P}}
 \global\long\def\QQ{\mathbb{Q}}
 \global\long\def\RR{\mathbb{R}}
 \global\long\def\SS{\mathbb{S}}
 \global\long\def\TT{\mathbb{T}}
 \global\long\def\UU{\mathbb{U}}
 \global\long\def\VV{\mathbb{V}}
 \global\long\def\WW{\mathbb{W}}
 \global\long\def\XX{\mathbb{X}}
 \global\long\def\YY{\mathbb{Y}}
 \global\long\def\ZZ{\mathbb{Z}}
 \global\long\def\fa{\mathfrak{a}}
 \global\long\def\fb{\mathfrak{b}}
 \global\long\def\fc{\mathfrak{c}}
 \global\long\def\fd{\mathfrak{d}}
 \global\long\def\fe{\mathfrak{e}}
 \global\long\def\ff{\mathfrak{f}}
 \global\long\def\fg{\mathfrak{g}}
 \global\long\def\fh{\mathfrak{h}}
 \global\long\def\fj{\mathfrak{j}}
 \global\long\def\fk{\mathfrak{k}}
 \global\long\def\fl{\mathfrak{l}}
 \global\long\def\fm{\mathfrak{m}}
 \global\long\def\fn{\mathfrak{n}}
 \global\long\def\fo{\mathfrak{o}}
 \global\long\def\fp{\mathfrak{p}}
 \global\long\def\fq{\mathfrak{q}}
 \global\long\def\fr{\mathfrak{r}}
 \global\long\def\fs{\mathfrak{s}}
 \global\long\def\ft{\mathfrak{t}}
 \global\long\def\fu{\mathfrak{u}}
 \global\long\def\fv{\mathfrak{v}}
 \global\long\def\fw{\mathfrak{w}}
 \global\long\def\fx{\mathfrak{x}}
 \global\long\def\fy{\mathfrak{y}}
 \global\long\def\fz{\mathfrak{z}}
 \global\long\def\fgl{\mathfrak{gl}}
 \global\long\def\fsl{\mathfrak{sl}}
 \global\long\def\fso{\mathfrak{so}}
 \global\long\def\fsp{\mathfrak{sp}}
 \global\long\def\GL{\mathrm{GL}}
 \global\long\def\SL{\mathrm{SL}}
 \global\long\def\SO{\mathrm{SO}}
 \global\long\def\Sp{\mathrm{Sp}}
 \global\long\def\ad{\mathrm{ad}}
 \global\long\def\Aut{\mathrm{Aut}}
 \global\long\def\dim{\mathrm{dim}}
 \global\long\def\End{\mathrm{End}}
 \global\long\def\ev{\mathrm{ev}}
 \global\long\def\half{\hbox{$\frac{1}{2}$}}
 \global\long\def\Hom{\mathrm{Hom}}
 \global\long\def\id{\mathrm{id}}
 \global\long\def\img{\mathrm{img}}
 \global\long\def\Ind{\mathrm{Ind}}
 \global\long\def\ker{\mathrm{ker}}
 \global\long\def\normeq{\unlhd}
 \global\long\def\qtr{\mathrm{qtr}}
 \global\long\def\tr{\mathrm{tr}}
 \global\long\def\Tr{\mathrm{Tr}}
 \global\long\def\vep{\varepsilon}
 \global\long\def\ii{\mathbf{i}}
 \global\long\def\jj{\mathbf{j}}
 \global\long\def\kk{\mathbf{k}}
 \global\long\def\uu{\mathbf{u}}
 \global\long\def\vv{\mathbf{v}}
 \global\long\def\ww{\mathbf{w}}
 \global\long\def\NOTE#1{{\color{blue} #1}}

\begin{abstract}
We review and introduce several approaches to the study of centralizer algebras of the infinite symmetric group $S_\infty$. Our study is led by the double commutant relationships between finite symmetric groups and partition algebras; each approach produces a centralizer algebra that is contained in a partition algebra. Our goal is to incorporate invariants of $S_\infty$, which ties our work to the study of symmetric functions in non-commuting variables. We resultantly explore sequence spaces as permutation modules, which yields families of non-unitary representations of $S_\infty$.

\medskip

\noindent{\sl Keywords: representation theory; partition algebras; diagram algebras; centralizer algebras;  infinite symmetric group; symmetric functions; bounded operators; Banach spaces.}
\end{abstract}

\section{Introduction~\label{sec:intro}}

\thispagestyle{empty}

Classical Schur-Weyl duality relates the representation theory of
the general linear group $\GL_{n}(\CC)$ and the finite symmetric
group $S_{k}$ via their commuting actions on a common vector space.
Namely, if $V\cong\CC^{n}$, then the diagonal action of $\GL(V)\cong\GL_{n}(\CC)$
on the algebraic $k$-fold tensor product $V^{\otimes k}$ fully centralizes
the permutation action of $S_{k}$ on the tensor factors: 
\begin{equation}\label{eq:SWduality}
\CC S_{k}\cong\End_{\GL(V)}(V^{\otimes k})\qquad\textnormal{when }n=\dim(V)\geq k.
\end{equation}
In\emph{~}\cite{TsilevichVershik2009}, Tsilevich and Vershik extend
this setting to study an infinite symmetric group $S_{\infty}$ by
creating an infinite tensor power $V^{\otimes\infty}$ on which $\GL(V)$
and $S_{\infty}$ share commuting actions. In the following, we study
the action of $S_{\infty}$ on finite tensor powers by shifting the
role of the symmetric group and generalizing $V$ instead.

The group $\GL_{n}(\CC)$ naturally contains $S_{n}$ as the set of $n\times n$
permutation matrices. By replacing $\GL_{n}(\CC)$ with its subgroup $S_{n}$,
and calculating the centralizer of the diagonal action of $S_{n}$
on $V^{\otimes k}$, where $V$ has now become a permutation module,
one acquires the \emph{partition algebra} $P_{k}(n)$. 
The partition algebras arose independently in the work of Martin \cite{Martin91, Martin94, Martin96, Martin00} and Jones \cite{Jones94} as  generalizations of the Temperley-Lieb algebras and the Potts model in statistical mechanics. Their work modernly serves to coalesce the study of several \emph{tensor power centralizer algebras}, including the group algebras of the finite symmetric groups, the Temperley-Lieb algebras, the Brauer algebras, and the algebras of uniform block permutations; all of these are examples of \emph{diagram algebras}, which we discuss in Section~\ref{sec:diagram_algebras}.

Sam and Snowden~\cite{SamSnowden2013} provide a first approach for treating $S_\infty$ as a finite tensor power centralizer algebra. Their work arrives amongst a recently rejuvenated effort to understand representation-theoretic stability of chains of groups, most salient being the chain of finite symmetric groups $S_1 \hookrightarrow S_2 \hookrightarrow S_3 \hookrightarrow \cdots$. Bowman, De Visscher, and Orellana \cite{BDVO12} use the representation-theoretic duality between $S_n$ and $P_k(n)$ to study stability in \emph{Kronecker coefficients}---decomposition numbers for symmetric group representations---when $n$ is large relative to $k$. Church, Ellenberg, and Farb \cite{CEF12} use category-theoretic methods to create corresponding  chains of modules, each of whose structure is tied together into a single \emph{FI-module}. Sam and Snowden's approach is related, but makes the additional connection back to Schur-Weyl duality, and treats many other examples of groups. The case of most interest here is their consideration of the action of $S_{\infty}$ on a countable-dimensional vector space $V\cong\CC^{(\NN)}$, as reviewed in Section~\ref{sec:SamSnowden}. This gives rise to the \emph{downwards} and \emph{upwards partition categories}, whose
homogeneous degree $k$ components each form subalgebras of a partition
algebra; we call these subalgebras \emph{bottom-} and \emph{top-propagating partition algebras}, respectively.

Another motivation for a treatment of $S_\infty$ comes from connections between Hopf algebras and symmetric functions. An influential result of Gessel \cite{Gessel84} provides that the Hopf algebra structure of the \emph{Solomon descent algebra}  is in duality with the algebraic structure of the \emph{quasi-symmetric functions}, and vice-versa. Malvenuto and Reutenauer \cite{MalReu95}  revisit this connection, in essence using the classical relationship between $\GL(V)$ and $S_k$ in \eqref{eq:SWduality}; they exploit the Hopf algebra structure of the tensor algebra 
\begin{equation}\label{eq:tensor_algebra}
T(V)=\bigoplus_{k=0}^{\infty}V^{\otimes k}, \qquad \text{ where } V\cong\CC^n,
\end{equation}
as well as its bi-module structure for $\GL(V)$ and $\bigoplus_{k \geq 0} \CC S_k$, which restricts to the work of Gessel. Aguiar and Orellana generalize \cite{MalReu95} in \cite{AguiarOrellana2008}, drawing on the centralizer relationship between the complex reflection groups $C_r \wr S_n$, in the role of $\GL_n(\CC)$, and the subalgebra $U_k$ of the partition algebra spanned by the \emph{uniform block permutations}. The result is a graded Hopf algebra $U = \bigoplus_{k \geq 0} U_k$, which analogously contains the Hopf algebra of \emph{symmetric functions in non-commuting variables} $\NC$. However, there is a fragility in the centralizer relationships between $C_r \wr S_n$ and $U_k$, necessitating $r$ and $n$ be kept large relative to $k$; a study of $U$ all at once thus suggests the presence of $S_\infty$ at work in the background.

A first guess of how to illuminate the role of $S_\infty$ in the work of \cite{AguiarOrellana2008} might be to let $S_\infty$ act on $\CC^{(\NN)}$ as in \cite{SamSnowden2013} discussed above. However, this particular generalization is missing the $S_\infty$-invariant
structure crucial to the application to symmetric functions. To see how, start from the finite-dimensional case, were $V$
has basis $\{v_{1},\dots,v_{n}\}$ over $\CC$. Then $V^{\otimes k}$
can be canonically identified with the space of homogeneous polynomials
of degree $k$ in non-commuting variables $v_{1},\dots,v_{n}$; the tensor algebra in \eqref{eq:tensor_algebra} is isomorphic to the full polynomial ring.
Then, the symmetric functions
$\NC$ correspond to elements of $T(V)$ that are invariant under
the action of $S_{n}$ by permuting $v_{1},\dots,v_{n}$;
in each homogeneous degree $k$, symmetric functions correspond to the invariant elements of
$V^{\otimes k}$. For example, 
\[
\left(V\right)^{S_{n}}=\CC\Bigl\{\sum_{i=1}^{n}v_{i}\Bigr\} \qquad\text{ and } \qquad
\left(V\otimes V\right)^{S_{n}}=\CC\Bigl\{\sum_{i=1}^{n}v_{i}\otimes v_{i},\;\sum_{i,j=1}^{n}v_{i}\otimes v_{j}\Bigr\}.
\]
For a complete treatment of all symmetric functions, one passes to the
case of countably many variables, where $S_{\infty}$ acts on functions
in non-commuting variables $\{v_{i}\}_{i\in\NN}=\{v_{1},v_{2},\ldots\}$.
No non-trivial $S_\infty$-invariants exist in the tensor space $V^{\otimes k}$ if $V\cong\CC^{(\NN)}$
as in\emph{~}\cite{SamSnowden2013}; $\CC^{(\NN)}$ contains only finite linear combinations of its basis elements.

In Section~\ref{sec:sequence_spaces},
we present two other choices of vector space $V$ in order to capture
the invariant structure discussed above. In each case, a countable set  $\{v_1, v_2, \dots\}$ is contained in $V$, and endomorphisms considered are determined by their images on that set. In Section \ref{subsec:p_Power_summable_sequences}, we let $V$ be a Banach space of $p$-power summable sequences $\ell^p$, and impart an action of $S_\infty$. A metric is chosen so that $V$ contains the desired $S_\infty$-invariants, and we show that all desired invariants appear in subsequent tensor powers. Theorem \ref{thm:Centralizer_in_Lp_case} then states that the centralizer of the action of $S_\infty$ on $\overline{V^{\otimes k}}$ inside the set of bounded maps is the same algebra $U_k$ used in \cite{AguiarOrellana2008}. 
In Section \ref{subsec:l_infinity}, we instead let $V$ be the Banach space of bounded sequences $\ell^\infty$,  and describe suitable analogs to $k$-fold tensor powers and bounded maps. Again, each space contains all desired $S_\infty$-invariants. Theorem \ref{thm:Centralizer_in_l_infty} states that the centralizer algebra is again a subalgebra of the partition algebra, this time isomorphic to the top-propagating partition algebra arising in \cite{SamSnowden2013}.

It is notable that the inclusion of non-trivial $S_\infty$-invariants into our permutation modules comes at a cost in return. Namely, the representations studied in Section~\ref{sec:sequence_spaces} are not unitary. The non-unitary representation
theory of wild groups, of which $S_{\infty}$ is an example, is largely
intractable; we refer to\emph{~}\cite{Okounkov1997} for a survey
of the representation theory of $S_{\infty}$ and to\emph{~}\cite{Kirillov1994}
for an exposition on tame and wild groups. By including the desired
$S_{\infty}$-invariants, we acquire representations which are reducible,
by design, but not fully decomposable. This is reflected in the fact
that the centralizer algebras of the actions of $S_{\infty}$ are small in some sense; in particular, the double commutant property present in classical Schur-Weyl duality 
does not hold, as discussed in Remark~\ref{rk:S_infty_not_fully_decomposable}. However, a further study of these non-unitary representations may still be made more manageable with leverage provided by the centralizer algebras calculated here.

\smallskip\noindent
\textbf{Acknowledgements:}  The first author would like to thank Aaron Lauve for bringing to her attention the link between diagram Hopf algebras and symmetric functions in non-commuting variables, thus inspiring the question of how to place such functions into a centralizer algebra framework.

\section{Diagram algebras \label{sec:diagram_algebras}}

A \emph{set partition} of a set $S$ is a set of pairwise disjoint
subsets of $S$, called \emph{blocks}, whose union is $S$. Fix $k\in\NN=\{1,2,\ldots\}$,
and denote 
\[
[k]=\{1,\dots,k\}\qquad\textnormal{and}\qquad[k']=\{1',\dots,k'\},
\]
so that $[k]\cup[k']=\{1,\dots,k,1',\dots,k'\}$ is formally a set
with $2k$ elements. To each set partition of $[k]\cup[k']$, we associate
an equivalence class of graphs, called a ($k$-)\emph{diagram}, as
follows. Consider the set of graphs with vertices $[k]\cup[k']$,
and let two graphs be equivalent if they have the same connected components.
To each diagram $d$ associate the set partition of $[k]\cup[k']$
determined by the connected components of any of its representatives.
For example, \def\UNIT{.75}
\begin{equation}
\label{diagram-equivalence}
\begin{matrix}
 \begin{tikzpicture}[scale=\UNIT]
  \foreach \x in {1,...,4} {
   \node[above] at (\x,1) {\tiny \x}; 
   \node[below] at (\x,0) {\tiny \x'};}
  \foreach \y in {0,1}{
   \foreach \x in {1, ..., 4}{
    \node[V] at (\x,\y) {};}}
  \draw (1,1)--(1,0)--(2,1) (2,0)--(3,0)--(4,0)--(4,1);
 \end{tikzpicture}
\end{matrix}
\qquad \text{ and } \qquad
\begin{matrix}
 \begin{tikzpicture}[scale=\UNIT]
  \foreach \x in {1,...,4}{
   \node[above] at (\x,1) {\tiny \x};
   \node[below] at (\x,0) {\tiny \x'};}
  \foreach \y in {0,1}{
   \foreach \x in {1, ..., 4}{
    \node[V] at (\x,\y) {};}}
  \draw (1,0)--(1,1)--(2,1) (4,1)--(3,0)--(4,0)--(4,1)--(2,0);
 \end{tikzpicture}
\end{matrix}
\end{equation}

\noindent are equivalent, and both represent diagrams for the set
partition $\{\{1,2,1'\},\{3\},\{4,2',3',4'\}\}$. Let $D_{k}$ be
the set of $k$-diagrams.

Let $\CC(x)$ be the field of rational functions with complex coefficients
in an indeterminate $x$. The product $d_{1}*d_{2}$ of two diagrams
$d_{1}$ and $d_{2}$ is defined as the concatenation of $d_{1}$
above $d_{2}$, where one identifies the bottom vertices of $d_{1}$
with the top vertices of $d_{2}$, and removes any components consisting
only of middle vertices. This defines the \emph{partition monoid},
which can be extended to an algebra as follows. If there are $m$
middle components in the concatenation of $d_{1}$ and $d_{2}$ as
before, let $d_{1}d_{2}=x^{m}d_{1}*d_{2}$, and extend linearly. For
example, \def\UNIT{.75}
$${
\begin{matrix}
 \begin{tikzpicture}[scale=\UNIT]
  \foreach \y in {0,1}{
   \foreach \x in {1, ..., 4}{
    \node[V] at (\x,\y) {};}}
  \draw (1,1)--(2,1) (1,0)--(2,0) (3,0)--(4,0)--(4,1);
 \end{tikzpicture}
\end{matrix}
\quad \cdot \quad
\begin{matrix}
 \begin{tikzpicture}[scale=\UNIT]
  \foreach \y in {0,1}{
   \foreach \x in {1, ..., 4}{
   \node[V] at (\x,\y) {};}}
	\draw  (3,1)-- (1,0) (4,1)--(4,0)--(3,0)--(2,0);
 \end{tikzpicture}
\end{matrix}
\quad = \quad x \cdot ~
\begin{matrix}
 \begin{tikzpicture}[scale=\UNIT] 	
  \foreach \y in {0,1}{
   \foreach \x in {1, ..., 4}{
    \node[V] at (\x,\y) {};}} 		
  \draw (1,1)--(2,1) (4,1)--(4,0)--(3,0)--(2,0)--(1,0);
 \end{tikzpicture}
\end{matrix}
}$$

\noindent This product is associative and independent of the graphs
chosen to represent the partition diagrams.

The \emph{partition algebra} $P_{k}(x)$ is the span over $\CC(x)$
of the set $D_{k}$ of $k$-diagrams equipped with this product, where
$P_{0}(x)=\CC(x)$. The vector space $P_{k}(x)$ is an associative
algebra with identity given by the diagram corresponding to $\{\{1,1'\},\dots,\{k,k'\}\}$.
The dimension of $P_{k}(x)$ is the number of set partitions of $2k$
elements, that is, the \emph{Bell number} $B(2k)$.

Each partition algebra contains many important subalgebras, including
group algebras of symmetric groups, Brauer algebras, and Temperley-Lieb
algebras; see\emph{~}\cite[Section 1]{HalversonRam2005}.
Three specific subalgebras of $P_{k}(x)$ will be of interest in later sections: $U_k$, $TP_k$, and $BP_k$. The algebra
$U_{k}$ of \emph{uniform block permutations} is spanned by
the set of diagrams $d$ satisfying 
\[
\AbsoluteValue{B\cap[k]}=\AbsoluteValue{B\cap[k']}\qquad\textnormal{for every }B\in d.
\]
For example, \def\UNIT{.75}
$${
\begin{matrix}
 \begin{tikzpicture}[scale=\UNIT]
  \foreach \x in {1,...,4}{
   \node[above] at (\x,1) {\tiny \x};
   \node[below] at (\x,0) {\tiny \x'};}
  \foreach \y in {0,1}{\foreach \x in {1, ..., 4}{
  \node[V] at (\x,\y) {};}}
  \draw (1,1)--(1,0) to [bend left] (3,0)--(2,1)--(1,1) (3,1)--(2,0) (4,0)--(4,1); 	
 \end{tikzpicture}
\end{matrix}
\in U_k, \quad \text{ but }\quad 
\begin{matrix}
 \begin{tikzpicture}[scale=\UNIT]
  \foreach \x in {1,...,4}{
   \node[above] at (\x,1) {\tiny \x};
   \node[below] at (\x,0) {\tiny \x'};}
  \foreach \y in {0,1}{
   \foreach \x in {1, ..., 4}{
    \node[V] at (\x,\y) {};}}
  \draw (1,1)--(1,0)--(2,1)--(1,1) (2,0)--(3,0)--(4,0)--(4,1)--(2,0); 	
 \end{tikzpicture}
\end{matrix}
\notin U_k. 
}$$
\noindent More on the role of $U_{k}$ can be found in Theorem \ref{thm:Centralizer_in_Lp_case} and Remark~\ref{rk:Uk_as_module_for_S_infty}.

We say a block is \emph{propagating} if it contains vertices in both the top and bottom of a diagram. The \emph{top-propagating partition algebra} $TP_k$ is the subalgebra spanned by diagrams for which all blocks containing top vertices are propagating, 
\begin{equation}\label{eq:TP}
TP_k = \CC\left\{ d\in D_{k}\,\bigl|\,\textnormal{for every }B\in d\colon B\cap[k]\neq B\right\}.
\end{equation}
Similarly, the \emph{bottom-propagating partition algebra} $BP_k$ is the subalgebra spanned by diagrams which have no blocks isolated to the bottom. 
For example, \def\UNIT{.75}
$$
\begin{matrix}
 \begin{tikzpicture}[xscale=\UNIT, yscale=-\UNIT]
  \foreach \x in {1,...,4}{
   \node[below] at (\x,1) {\tiny \x'};
   \node[above] at (\x,0) {\tiny \x};}
  \foreach \y in {0,1}{
   \foreach \x in {1, ..., 4}{
    \node[V] at (\x,\y) {};}}
  \draw (1,1)--(1,0) (2,1)--(3,1) (2,0)--(3,0)--(4,0)--(4,1)--(2,0);
 \end{tikzpicture}
\end{matrix}
\in TP_{k} \quad \text{ and }\quad
\begin{matrix}
 \begin{tikzpicture}[scale=\UNIT]
  \foreach \x in {1,...,4}{
   \node[above] at (\x,1) {\tiny \x};
   \node[below] at (\x,0) {\tiny \x'};}
  \foreach \y in {0,1}{
   \foreach \x in {1, ..., 4}{
    \node[V] at (\x,\y) {};}}
  \draw (1,1)--(1,0) (2,1)--(3,1) (2,0)--(3,0)--(4,0)--(4,1)--(2,0); 	
 \end{tikzpicture}
\end{matrix}
\in BP_k.
$$ 
These algebras appear in Sections \ref{sec:SamSnowden} and \ref{subsec:l_infinity}, respectively. Note that for each of $U_k$, $TP_k$, and $BP_k$, concatenation of diagrams never results in middle components, and therefore none of these algebras are dependent on the parameter $x$.

\section{Vector spaces of finite or countable dimension as permutation modules
\label{sec:vector_space}}

\subsection{Finite symmetric group $S_{n}$ and its action on $(\CC^{n})^{\otimes k}$}

Let $V$ denote the $n$-dimensional permutation representation of
the symmetric group $S_{n}$. That is, let $V$ have basis $\{v_{1},\dots,v_{n}\}$,
on which $S_{n}$ acts by 
\[
\sigma\cdot v_{i}=v_{\sigma(i)}\qquad\textnormal{for }\sigma\in S_{n}.
\]
For $\ii=(i_{1},\dots,i_{k})\in[n]^{k}$, a $k$-tuple of integers
in $\{1,\dots,n\}$, and $\sigma\in S_{n}$, we let 
\[
v_{\ii}=v_{i_{1}}\otimes\dots\otimes v_{i_{k}}\in V^{\otimes k}\qquad\textnormal{and}\qquad\sigma(\ii)=(\sigma(i_{1}),\dots,\sigma(i_{k})).
\]
Let $S_{n}$ act diagonally on the basis $\{v_{\ii}\}_{\ii\in[n]^{k}}$
of $V^{\otimes k}$, that is, 
\[
\sigma\cdot v_{\ii}=v_{\sigma(\ii)},
\]
and extend this action linearly to $V^{\otimes k}$. Thus, $V^{\otimes k}$
becomes a module for $S_{n}$.

As in Section~\ref{sec:diagram_algebras}, arrange the vertices of
a $k$-diagram reading $1,\ldots,k$ from left to right in the top
row and $1',\ldots,k'$ from left to right in the bottom row. For
each $k$-diagram $d$ and each $2k$-tuple of integers $i_{1}\ldots,i_{k},i_{1'},\ldots,i_{k'}\in[n]$,
we define 
\begin{equation}
d_{(i_{1'},\ldots,i_{k'})}^{(i_{1},\ldots,i_{k})}=\begin{cases}
1 & \textnormal{ if }i_{\ell}=i_{m}\textnormal{ whenever vertices }\ell\textnormal{ and }m\textnormal{ are connected in }d,\\
0 & \textnormal{otherwise}.
\end{cases}\label{eq:diagram_matrix_entries}
\end{equation}
For example, \def\UNIT{.5}
\def\TRIANGLE{
 \begin{matrix}
  \begin{tikzpicture}[scale=\UNIT]
   \foreach \x in {1,2}{
    \node[V, label=above:{\tiny $\x$}] at (\x, 1) {};
    \node[V, label=below:{\tiny $\x'$}] at (\x, 0){};}
   \draw (1,0)--(1,1)--(2,0)--(1,0); 	
  \end{tikzpicture} 
 \end{matrix}
} 
\[
\Bigg(\TRIANGLE\Bigg)_{(3,5)}^{(3,7)}=0\qquad\textnormal{and}\qquad\Bigg(\TRIANGLE\Bigg)_{(3,3)}^{(3,7)}=\Bigg(\TRIANGLE\Bigg)_{(4,4)}^{(4,4)}=1.
\]
The algebra $P_{k}(n)$ acts on $V^{\otimes k}$; namely, for each
$d\in P_{k}(n)$ and $\ii\in[n]^{k}$, we define 
\[
d\cdot v_{\ii}=\sum_{\jj\in[n]^{k}}d_{\ii}^{\jj}v_{\jj},
\]
and extend linearly. For example, we have the following identities
in the action of $P_{2}(n)$ on $V^{\otimes2}$ { \def\UNIT{.5}
\begin{equation}
\label{actionsofP}
 \begin{array}{c@{~}c}
  \begin{matrix}
   \begin{tikzpicture}[scale=\UNIT]
    \foreach \x in {1,2}{
     \filldraw [black] (\x, 0) circle (2pt);
     \filldraw [black] (\x, 1) circle (2pt);} 	
    \draw (1,0)--(1,1)--(2,1)-- (2,0)--(1,0); 	
   \end{tikzpicture} 
  \end{matrix}
  \cdot (v_i\otimes v_j) = \delta_{ij} v_i\otimes v_i,  & \qquad 
  \begin{matrix} 
   \begin{tikzpicture}[scale=\UNIT]
    \foreach \x in {1,2}{
     \filldraw [black] (\x, 0) circle (2pt); 
     \filldraw [black] (\x, 1) circle (2pt);}
    \draw (1,0)--(2,1) (2,0)--(1,1);
   \end{tikzpicture} 
  \end{matrix}  
  \cdot (v_i\otimes v_j) = v_j \otimes v_i, \\  
  \begin{matrix} 
   \begin{tikzpicture}[scale=\UNIT] 	 
    \foreach \x in {1,2} {
     \filldraw [black] (\x, 0) circle (2pt);
     \filldraw [black] (\x, 1) circle (2pt);}
    \draw (1,1)--(2,1) (2,0)--(1,0); 	
   \end{tikzpicture} 
  \end{matrix}  
  \cdot (v_i\otimes v_j) =  \delta_{ij}\sum\limits_{\ell=1}^n v_\ell \otimes v_\ell,   \qquad &  \text{ and } \qquad    
  \begin{matrix} 
   \begin{tikzpicture}[scale=\UNIT] 	 
    \foreach \x in {1,2} {
     \filldraw [black] (\x, 0) circle (2pt); 
     \filldraw [black] (\x, 1) circle (2pt);} 	
    \draw (1,0)--(2,1); 	
   \end{tikzpicture} 
  \end{matrix}
  \cdot (v_i\otimes v_j) = \left(\sum\limits_{\ell=1}^n v_\ell\right) \otimes v_i. 
 \end{array}
\end{equation}} 

\noindent \begin{thm}[\cite{Jones94}]\label{thm:PkFullCentralizer}
$S_{n}$ and $P_{k}(n)$ generate full centralizers of each other
in $\mbox{End}(V^{\otimes k})$. In fact,
\begin{enumerate}
\item $P_{k}(n)$ generates $\mbox{End}_{S_{n}}(V^{\otimes k})$, and when
$n\geq2k$, we have $P_{k}(n)\cong\mbox{End}_{S_{n}}(V^{\otimes k})$; 
\item $S_{n}$ generates $\mbox{End}_{P_{k}(n)}(V^{\otimes k})$. 
\end{enumerate}
\end{thm}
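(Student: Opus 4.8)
The plan is to prove the two centralizer statements by a direct computation with the basis $\{v_{\ii}\}$ of $V^{\otimes k}$, following the classical Schur-Weyl template adapted to the permutation action. First I would fix notation: an arbitrary $T \in \End(V^{\otimes k})$ is recorded by its matrix entries $T_{\ii}^{\jj}$ in the basis $\{v_{\ii}\}_{\ii \in [n]^k}$, and the action of $\sigma \in S_n$ is the permutation matrix with entries $\delta_{\sigma(\ii),\jj}$. Writing out the commuting condition $\sigma T = T \sigma$ in coordinates yields $T_{\sigma(\ii)}^{\sigma(\jj)} = T_{\ii}^{\jj}$ for all $\sigma \in S_n$ and all $\ii, \jj \in [n]^k$. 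Thus $\End_{S_n}(V^{\otimes k})$ is exactly the space of matrices constant on $S_n$-orbits of pairs $(\ii,\jj) \in [n]^k \times [n]^k = [n]^{2k}$.

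The key combinatorial step is to identify those orbits. Two $2k$-tuples $(\ii,\jj)$ and $(\ii',\jj')$ lie in the same $S_n$-orbit (under the diagonal action on all $2k$ coordinates) precisely when they induce the same set partition of the index set $[k] \cup [k']$ — namely the partition whose blocks are the fibers of the map sending a vertex to its label. So the $S_n$-orbits are indexed by set partitions of $[k]\cup[k']$ that use at most $n$ blocks, i.e. exactly by the $k$-diagrams $d$ with at most $n$ blocks. For such a $d$, the associated orbit-indicator matrix has entries $d_{\ii}^{\jj}$ with the convention that $d_{\ii}^{\jj} = 1$ iff the labelling $\ii \cup \jj$ is \emph{constant on blocks of $d$} — I should be slightly careful here, since the entries defined in \eqref{eq:diagram_matrix_entries} only require equality \emph{within} connected components, not distinctness \emph{between} them, so the matrix of $d$ is the sum of the orbit indicators over all coarsenings of $d$; nevertheless the set $\{d : d \in D_k,\ d \text{ has} \le n \text{ blocks}\}$ and the set of orbit-indicator matrices span the same subspace of $\End(V^{\otimes k})$, and when $n \ge 2k$ every $d \in D_k$ has at most $2k \le n$ blocks so all of $P_k(n)$ appears and the spanning set has size $B(2k) = \dim P_k(n)$, giving linear independence and hence the isomorphism in part (1). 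For $n < 2k$ one only gets a surjection $P_k(n) \twoheadrightarrow \End_{S_n}(V^{\otimes k})$, matching the statement. The compatibility of this vector-space identification with the algebra structure — that composition of the operators matches the diagram product $d_1 d_2 = x^m d_1 * d_2$ with $x$ specialized to $n$ — is a separate bookkeeping check: composing orbit indicators introduces a factor of $n$ for each connected component of the concatenation supported entirely on middle vertices, exactly the exponent $m$.

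For part (2), I would again work in coordinates. Let $T \in \End_{P_k(n)}(V^{\otimes k})$. The idempotent diagrams corresponding to set partitions of $[k]\cup[k']$ of the form $\{\{1,1'\},\dots,\{k,k'\}\}$ coarsened by merging some of the pairs act as diagonal projections onto spans of $\{v_{\ii} : \ii \text{ has a prescribed equality pattern}\}$; commuting with all of these forces $T_{\ii}^{\jj} = 0$ unless $\ii$ and $\jj$ have the same equality pattern, i.e. unless $\jj = \pi(\ii)$ componentwise for some bijection $\pi$ between the distinct-value sets. Then commuting with the ``permutation-type'' diagrams (those coming from $S_k$ acting on tensor positions) and with the diagrams that average over a value — such as the last example in \eqref{actionsofP}, $v_i \otimes v_j \mapsto (\sum_\ell v_\ell) \otimes v_i$ — pins down the remaining freedom and shows $T$ must be a linear combination of the permutation matrices $v_{\ii} \mapsto v_{\tau(\ii)}$, $\tau \in S_n$; conversely all of these clearly commute with every diagram by the orbit description above. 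Hence $\CC S_n$ surjects onto $\End_{P_k(n)}(V^{\otimes k})$, which is the claim; one need not (and in general cannot, for $n$ large) assert injectivity.

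The main obstacle I anticipate is part (2): extracting from ``commutes with every diagram'' the conclusion that $T$ lies in the span of the $S_n$-permutation matrices requires choosing the right finite family of diagrams to test against and carefully disentangling the equality-pattern constraints from the value-permutation constraints. The cleaner route, which I would actually pursue, is to avoid this hands-on argument by invoking double-centralizer formalism: part (1) shows $\End_{S_n}(V^{\otimes k})$ is the image of the semisimple algebra $\CC S_n$ acting on $V^{\otimes k}$, so $V^{\otimes k}$ is a semisimple $\CC S_n$-module, and the general double commutant theorem then gives $\End_{P_k(n)}(V^{\otimes k}) = \End_{\End_{S_n}(V^{\otimes k})}(V^{\otimes k}) = $ image of $\CC S_n$. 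This reduces part (2) to part (1) plus the standard double commutant theorem, leaving the genuinely new content in the combinatorial orbit count of part (1).
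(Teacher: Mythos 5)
Your proposal is correct and follows essentially the same route as the paper, which only reviews the key orbit computation \eqref{eq:finite_commutation} for part (1) and cites \cite{Jones94} for the remaining details. What you supply beyond that sketch --- the coarsening relation between the diagram matrices of \eqref{eq:diagram_matrix_entries} and the true orbit indicators, the dimension count giving the isomorphism for $n\geq 2k$, the check that composition matches the diagram product with $x=n$, and the reduction of part (2) to the double commutant theorem via semisimplicity of $\CC S_{n}$ --- is the standard completion of that argument and is sound.
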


The main calculation in the proof of part (1) will be used in several
settings later, so let us review. Let $A\in\End(V^{\otimes k})$ be
given by the matrix $(A_{\ii}^{\jj})_{\ii,\jj\in[n]^{k}}$ such that
for each $\ii\in[n]^{k}$, 
\[
A(v_{\ii})=\sum_{\jj\in[n]^{k}}A_{\ii}^{\jj}v_{\jj}.
\]
If $\sigma\in S_{n}$, and $\sigma A=A\sigma$ in $\End(V^{\otimes k})$,
then for each $\ii\in[n]^{k}$, 
\begin{equation}
\sigma A(v_{\ii})=\sum_{\jj\in[n]^{k}}A_{\ii}^{\jj}v_{\sigma(\jj)}\qquad\textnormal{equals}\qquad A\sigma(v_{\ii})=\sum_{\jj\in[n]^{k}}A_{\sigma(\ii)}^{\jj}v_{\jj}=\sum_{\jj\in[n]^{k}}A_{\sigma(\ii)}^{\sigma(\jj)}v_{\sigma(\jj)},\label{eq:finite_commutation}
\end{equation}
since $\sigma$ is a bijection of $[n]^{k}$. So for every $\sigma\in S_{n}$,
we have 
\[
\sigma A=A\sigma\quad\textnormal{if and only if}\quad A_{\sigma(\ii)}^{\sigma(\jj)}=A_{\ii}^{\jj}\quad\textnormal{for every }\ii,\jj\in[n]^{k}.
\]
Thus, $A\in\End_{S_{n}}(V^{\otimes k})$ if and only if the entries
of $(A_{\ii}^{\jj})_{\ii,\jj\in[n]^{k}}$ are uniform on $S_{n}$-orbits,
which exactly describes those linear transformations coming from $P_{k}(n)$.

\subsection{Infinite symmetric group $S_{\infty}$ and its action on $(\CC^{(\NN)})^{\otimes k}$}
\label{sec:SamSnowden} 

Embed $S_{n}\hookrightarrow S_{n+1}$ as the
subgroup which fixes $n+1$. Then, let $S_{\infty}$ be the direct
limit of $\{S_{n}\}_{n\in\NN}$, that is, the permutations of $\NN$
which fix all but finitely many elements. Let $V$ be a countable-dimensional
vector space with basis $\{v_{i}\}_{i\in\NN}$, that is, 
\[
V=\CC\{v_{i}\}_{i\in\NN}=\left\{ \sum_{i\in\NN}a_{i}v_{i}\,\biggl|\, a_{i}=0\textnormal{ for all but finitely many }i\right\} \cong\CC^{(\NN)}.
\]
The same calculation as in~\eqref{eq:finite_commutation} leads to
the same conclusion; namely, $A\in\End_{S_{\infty}}(V^{\otimes k})$
if and only if the entries of its matrix representation $(A_{\ii}^{\jj})_{\ii,\jj\in\NN^{k}}$
with respect to $\{v_{\ii}\}_{\ii\in\NN^{k}}$ are uniform on $S_{\infty}$-orbits.
So $\End_{S_{\infty}}(V^{\otimes k})$ is still spanned by endomorphisms
coming from diagrams in $D_{k}$. However, endomorphisms of $V^{\otimes k}$
have images in $V^{\otimes k}$, that is, for every $\ii\in\NN^{k}$,
the set $\{\jj\in\NN^{k}\mid A_{\ii}^{\jj}\neq0\}$ is finite. So
$\End_{S_{\infty}}(V^{\otimes k}) \cong TP_k,$ the top-propagating partition algebra defined in \eqref{eq:TP}. 

In\emph{~}\cite{SamSnowden2013}, Sam and Snowden study the endomorphisms
of 
\[
T(V)=\bigoplus_{k=0}^{\infty}V^{\otimes k}
\]
generated by \emph{$k,\ell$-diagrams} on $[k]\cup[\ell']$, where $k$ and $\ell$
are not necessarily equal, that have no blocks isolated to the top row. Multiplication between a $k_{1},\ell_{1}$-diagram
and a $k_{2},\ell_{2}$-diagram is defined as concatenation when $\ell_{1}=k_{2}$
and is zero otherwise. No middle components arise in resolving concatenations
because there are no blocks isolated to the top row of any diagram;
so multiplication involves no parameter. Under this multiplication, the  {$k,\ell$-diagrams} for $k, \ell \in \NN$ generate the \emph{upwards
partition algebra} $UP$\footnote{Note that here, we write all actions as left actions, whereas the actions in \cite{SamSnowden2013} are right actions; therefore our use of upwards and downwards is reversed.}. The top-propagating partition algebra is exactly the degree-$k$ homogeneous component of $UP$. 

\section{Sequence spaces as permutation modules}\label{sec:sequence_spaces}

The study of symmetric functions in countably many variables requires
to leave the finite-dimensional realm and instead consider vector
spaces $V$ that contain countable linearly independent subsets $\{v_{i}\}_{i\in\NN}$.
With an eye toward studying $S_{\infty}$ invariants as mentioned
in Section~\ref{sec:intro}, we require $\sum_{i=1}^{\infty}v_{i}$
to be interpretable as an element of $V$. This rules out the possibility
of $\{v_{i}\}_{i\in\NN}$ being a \emph{Hamel basis} of $V$ as in
Section~\ref{sec:SamSnowden}, in which case every vector has a unique
expression as a finite linear combination of the basis vectors $\{v_{i}\}_{i\in\NN}$.
We propose the following approach:

\begin{center}
\begin{minipage}[t]{0.95\columnwidth}%
Choose a vector space $V$ containing a countable linearly independent
subset $\{v_{i}\}_{i\in\NN}$ and a vector $\sum_{i=1}^{\infty}v_{i}$
that is invariant under $\CC S_{\infty}$, which is considered as
a subalgebra of an algebra of endomorphisms of $V$ that are determined
by their images on $\{v_{i}\}_{i\in\NN}$.%
\end{minipage}
\par\end{center}

\subsection{$p$-power summable sequences\label{subsec:p_Power_summable_sequences}}

We recall definitions from Banach space theory that will be used throughout
this section. A sequence $\{w_{\ell}\}_{\ell\in\NN}$ in a normed
vector space $V$ is called a \emph{Cauchy sequence}, if for every
$\varepsilon>0$ there is some $L\in\NN$ such that for all integers
$\ell,m>L$, we have $\Norm{w_{\ell}-w_{m}}<\varepsilon$. A normed
vector space $V$ is called a \emph{Banach space} if every Cauchy
sequence $\{w_{\ell}\}_{\ell\in\NN}$ in $V$ converges to some vector
$w=\lim_{\ell\to\infty}w_{\ell}$ in $V$, meaning $\lim_{\ell\to\infty}\Norm{w-w_{\ell}}=0$.
A sequence $\{v_{i}\}_{i\in\NN}$ in a Banach space $V$ is called
a \emph{Schauder basis} if for every $v\in V$ there exist unique
scalars $\{a_{i}\}_{i\in\NN}$ such that 
\begin{equation}\label{eq:Sc-basis}
v=\sum_{i=1}^{\infty}a_{i}v_{i}=\lim_{\ell\to\infty}\sum_{i=1}^{\ell}a_{i}v_{i}\qquad\textnormal{meaning}\qquad\lim_{\ell\to\infty}\,\Normbigg{v-\sum_{i=1}^{\ell}a_{i}v_{i}}=0;
\end{equation}
qualitatively, a Schauder basis is a linearly independent set such that every element of $V$ can be written uniquely as in \eqref{eq:Sc-basis}. 
The basis $\{v_{i}\}_{i\in\NN}$ is called \emph{unconditional} if
the convergence is always unconditional. If now $A$ is a continuous
endomorphism on $V$, and if $v=\sum_{i=1}^{\infty}a_{i}v_{i}$, then
$\{A(v_{i})\}_{i\in\NN}$ determines $A(v)$ since 
\[
A(v)=A\biggl(\lim_{\ell\to\infty}\sum_{i=1}^{\ell}a_{i}v_{i}\biggr)=\lim_{\ell\to\infty}A\biggl(\sum_{i=1}^{\ell}a_{i}v_{i}\biggr)=\lim_{\ell\to\infty}\sum_{i=1}^{\ell}a_{i}A(v_{i}).
\]
We therefore study the following special case of the above-mentioned
approach:

\begin{center}
\begin{minipage}[t]{0.95\columnwidth}%
Choose a Banach space $V$ with a countable Schauder basis\emph{ }$\{v_{i}\}_{i\in\NN}$
such that $\sum_{i=1}^{\infty}v_{i}$ converges, and study $\CC S_{\infty}$
as a subalgebra of the algebra of continuous endomorphisms of $V$.%
\end{minipage}
\par\end{center}

\noindent To this end, we consider $L^{p}$-spaces of sequences of
the form 
\begin{equation}
V=L^{p}(\NN,\mu)=\left\{ v=(a_{1},a_{2},\ldots)\in\CC^{\NN}\,\biggl|\,\Norm v^{p}=\sum_{i=1}^{\infty}\AbsoluteValue{a_{i}}^{p}\mu_{i}^{p}<\infty\right\} ,\label{eq:Lp_space}
\end{equation}
where $1\leq p<\infty$, and $\mu$ is a weighted counting measure
on $\NN$, which is determined by a sequence $(\mu_{i})_{i\in\NN}$
with $\mu_{i}>0$ for all $i\in\NN$ via $\mu(\{i\})=\mu_{i}^{p}$.
The space $\ell^{\infty}=L^{\infty}(\NN,\mu)$ of bounded sequences
will be dealt with in Section~\ref{subsec:l_infinity}. The normed
vector space~\eqref{eq:Lp_space} is a Banach space that has unconditional
Schauder bases such as $\{v_{i}\}_{i\in\NN}$ given by $v_{i}=(\delta_{ij})_{j\in\NN}$,
that is, $v_{1}=(1,0,0,\ldots)$, $v_{2}=(0,1,0\ldots)$, etc. In
particular, $v=(a_{1},a_{2},\ldots)\in V$ if and only if $\sum_{i\in\NN}a_{i}v_{i}$
converges unconditionally to $v$ in $V$. This allows to introduce
the notation $\sum_{i\in\NN}a_{i}v_{i}$ for arbitrary $(a_{1},a_{2},\ldots)\in\CC^{\NN}$
so that 
\[
V=\left\{ v=\sum_{i\in\NN}a_{i}v_{i}\in\CC^{\NN}\,\biggl|\,\Norm v^{p}=\sum_{i=1}^{\infty}\AbsoluteValue{a_{i}}^{p}\mu_{i}^{p}<\infty\right\} .
\]
In order to ensure that $\sum_{i\in\NN}v_{i}=(1,1,1,\ldots)\in V$,
we henceforth require that $(\mu_{i})_{i\in\NN}\in\ell^{p}$, that
is, 
\[
\sum_{i=1}^{\infty}\mu_{i}^{p}<\infty.
\]
We turn to the algebraic $k$-fold tensor product $V^{\otimes k}=(L^{p}(\NN,\mu))^{\otimes k}$,
and note that it carries a canonical cross norm so that its completion
$\overline{V^{\otimes k}}$ is isomorphic to $V$~\cite[Chapter 7]{DefantFloret1993};
namely, 
\[
\overline{V^{\otimes k}}=L^{p}(\NN^{k},\mu^{k})=\Biggl\{ v=\sum_{\ii\in\NN^{k}}a_{\ii}v_{\ii}\in\CC^{\NN^{k}}\,\biggl|\,\Norm v^{p}=\sum_{\ii\in\NN^{k}}\AbsoluteValue{a_{\ii}}^{p}\mu_{\ii}^{p}<\infty\Biggr\},
\]
where $\sum_{\ii\in\NN^{k}}a_{\ii}v_{\ii}$ represents the function
$v\colon\NN^{k}\to\CC$ given by $v(\ii)=a_{\ii}$, and for $\ii=(i_{1},\dots,i_{k})\in\NN^{k}$,
\[
v_{\ii}=v_{i_{1}}\otimes\cdots\otimes v_{i_{k}}\qquad\textnormal{and}\qquad\mu_{\ii}=\prod_{\ell=1}^{k}\mu_{i_{\ell}}=\Norm{v_{\ii}}.
\]
In particular, $\{v_{\ii}\}_{\ii\in\NN^{k}}$ is an unconditional
Schauder basis of $\overline{V^{\otimes k}}$, and $V^{\otimes k}$
can be identified with the dense subset of linear combinations of
vectors of the form 
\[
\sum_{\ii=(i_{1},\dots,i_{k})\in\NN^{k}}\Biggl(\prod_{\ell=1}^{k}a_{\ell i_{\ell}}\Biggr)v_{\ii}=\Biggl(\sum_{i_{1}\in\NN}a_{1i_{1}}v_{i_{1}}\Biggr)\otimes\ldots\otimes\Biggl(\sum_{i_{k}\in\NN}a_{ki_{k}}v_{i_{k}}\Biggr)\textnormal{ with }\sup_{1\leq\ell\leq k}\,\sum_{i_{\ell}=1}^{\infty}\AbsoluteValue{a_{\ell i_{\ell}}}^{p}\mu_{i_{\ell}}^{p}<\infty.
\]
We point out that there are several so-called \emph{reasonable cross
norms} on tensor products of Banach spaces, ranging from the injective
to the projective one as introduced by Grothendieck, but refer to~\cite{DefantFloret1993}
for a detailed exposition.

A linear operator on a Banach space, say $A\colon\overline{V^{\otimes k}}\to\overline{V^{\otimes k}}$,
is continuous if and only if it is \emph{bounded}, meaning it maps
bounded sets to bounded sets, which happens precisely if it has finite
\emph{operator norm}, 
\[
\Norm A=\sup_{v\in\overline{V^{\otimes k}}\colon\Norm v\leq1}\Norm{A(v)}.
\]
Moreover, the set of bounded operators 
\[
\mathcal{B}(\overline{V^{\otimes k}})=\left\{ A\in\End(\overline{V^{\otimes k}})\,\Bigl|\,\Norm A<\infty\right\} 
\]
is a Banach space with respect to the operator norm.

We let $S_{\infty}$ act diagonally on the basis $\{v_{\ii}\}_{\ii\in\NN^{k}}$
of $\overline{V^{\otimes k}}$ so that for $\sigma\in S_{\infty}$
and $\ii=(i_{1},\dots,i_{k})\in\NN^{k}$,
\[
\sigma\cdot v_{\ii}=v_{\sigma(\ii)},\qquad\text{ where}\qquad\sigma(\ii)=(\sigma(i_{1}),\dots,\sigma(i_{k})),
\]
and we extend linearly. In other words, if $\sigma\in S_{\infty}$
and $v=\sum_{\ii\in\NN^{k}}a_{\ii}v_{\ii}\in\overline{V^{\otimes k}}$,
then 
\[
\sigma\cdot v=\sum_{\ii\in\NN^{k}}a_{\ii}v_{\sigma(\ii)}=\sum_{\ii\in\NN^{k}}a_{\sigma^{-1}(\ii)}v_{\ii}\in\overline{V^{\otimes k}}.
\]
Each $\sigma\in S_{\infty}$ gives rise to a continuous endomorphism
since it fixes all but finitely many of the vectors $\{v_{\ii}\}_{\ii\in\NN^{k}}$.
Thus, $\CC S_{\infty}$ can be regarded as a subalgebra of $\mathcal{B}(\overline{V^{\otimes k}})$.
In view of the connection to symmetric polynomials, we define for
each set partition $\pi$ of $[k]$, 
\[
\pi_{(i_{1},\ldots,i_{k})}=\begin{cases}
1 & \textnormal{if }i_{\ell}=i_{m}\textnormal{ whenever }\ell\textnormal{ and }m\textnormal{ are in the same block of }\pi,\\
0 & \text{ otherwise,}
\end{cases}
\]
and let 
\begin{equation}
m_{\pi}=\sum_{\ii\in\NN^{k}}\pi_{\ii}v_{\ii}\in\CC^{\NN^{k}}.\label{eq:Monomial_basis_for_S_infty_invariant_subspace}
\end{equation}
For example, 
\[
m_{\{\{1,2,3\}\}}=\sum_{i\in\NN}v_{i}\otimes v_{i}\otimes v_{i}\qquad\text{ and }\qquad m_{\{\{1,3\},\{2\}\}}=\sum_{i,j\in\NN}v_{i}\otimes v_{j}\otimes v_{i}.
\]
These elements correspond to the so-called \emph{monomial symmetric functions} of degree $k$ in $\NC$, which form a basis for the homogeneous symmetric functions of degree $k$ in non-commuting variables. 
The following proposition reveals that the finitely many functions
defined by~\eqref{eq:Monomial_basis_for_S_infty_invariant_subspace}
are elements of $\overline{V^{\otimes k}}$, and that they yield a
basis of the subspace $(\overline{V^{\otimes k}})^{S_{\infty}}$ of
$S_{\infty}$-invariant vectors. In particular, $\overline{V^{\otimes k}}$
encapsulates symmetric polynomials of degree $k$ in countably
many non-commuting variables.

\begin{prop}\label{prop:S_infty_invariant_subspace} The vector space
$(\overline{V^{\otimes k}})^{S_{\infty}}$ has $\{m_{\pi}\mid\pi\textnormal{ a set partition of }[k]\}$
as a basis.\end{prop}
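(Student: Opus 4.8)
The plan is to show two things: first, that each $m_\pi$ genuinely lies in $\overline{V^{\otimes k}} = L^p(\NN^k,\mu^k)$ (a convergence/summability check), and second, that these elements span exactly the $S_\infty$-invariants, with linear independence being essentially immediate. For the first point, I would compute $\Norm{m_\pi}^p = \sum_{\ii\in\NN^k} \pi_\ii \,\mu_\ii^p$. The indicator $\pi_\ii$ forces $\ii$ to be constant on each block of $\pi$, so if $\pi$ has $b$ blocks this sum factors as a product over blocks, and each block of size $s$ contributes $\sum_{i\in\NN} \mu_i^{ps}$. Since we have assumed $(\mu_i)\in\ell^p$, i.e. $\sum_i \mu_i^p < \infty$, and since $\mu_i^{ps}\le (\sup_j \mu_j)^{p(s-1)}\mu_i^p$ — or more crudely since $\mu_i\to 0$ so $\mu_i^{ps}$ is eventually dominated by $\mu_i^p$ — each factor is finite, hence $\Norm{m_\pi} < \infty$ and $m_\pi\in\overline{V^{\otimes k}}$.

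For the spanning statement, the key is to reuse the orbit-uniformity computation from~\eqref{eq:finite_commutation}, now applied to coefficient vectors rather than matrices. Given $v = \sum_{\ii\in\NN^k} a_\ii v_\ii \in \overline{V^{\otimes k}}$, the action $\sigma\cdot v = \sum_\ii a_{\sigma^{-1}(\ii)} v_\ii$ shows (using that $\{v_\ii\}$ is a Schauder basis, so coefficients are unique) that $v$ is $S_\infty$-invariant if and only if $a_{\sigma(\ii)} = a_\ii$ for all $\sigma\in S_\infty$ and all $\ii\in\NN^k$, i.e. the coefficient function $\ii\mapsto a_\ii$ is constant on $S_\infty$-orbits in $\NN^k$. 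Now I would identify the $S_\infty$-orbits on $\NN^k$: two tuples $\ii,\jj$ are in the same orbit precisely when they induce the same set partition of $[k]$ (the partition recording which coordinates are equal). Writing $\pi(\ii)$ for that partition, orbit-constancy means $a_\ii = c_{\pi(\ii)}$ for constants $c_\pi$ indexed by set partitions $\pi$ of $[k]$, so $v = \sum_\pi c_\pi m_\pi$ at least formally; the finiteness $\Norm v<\infty$ together with the first part guarantees this is a genuine (finite) linear combination in $\overline{V^{\otimes k}}$. Linear independence follows because the $m_\pi$ have disjoint supports among the basis vectors $v_\ii$: $v_\ii$ appears in $m_\pi$ iff $\pi = \pi(\ii)$, and each $v_\ii$ lies in the support of exactly one $m_\pi$.

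The one genuine subtlety — the step I expect to be the main obstacle — is handling the infinite sum carefully: a priori an $S_\infty$-invariant $v$ has coefficients constant on the (infinitely many) orbits, and one must confirm that membership in $L^p(\NN^k,\mu^k)$ does not secretly allow infinitely many distinct orbit-values to contribute, i.e. that the sum over set partitions really is finite. This is immediate here because there are only finitely many set partitions of the finite set $[k]$ (the Bell number $B(k)$ of them), so $v = \sum_\pi c_\pi m_\pi$ is automatically a finite sum and the norm computation of the first part applies termwise. So the argument is essentially the orbit bookkeeping of~\eqref{eq:finite_commutation} combined with one elementary $\ell^p$ estimate; I would present it in that order, finiteness of $m_\pi$ first, then orbit analysis, then independence.
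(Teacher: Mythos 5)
Your proposal is correct and follows essentially the same route as the paper's proof: identify the $S_\infty$-orbits on $\NN^k$ with set partitions of $[k]$ (finitely many, $B(k)$ of them), verify the relevant $\ell^p$ summability using $\sum_i \mu_i^p<\infty$, and conclude by linear independence of the $m_\pi$. The only cosmetic difference is that you bound $\Norm{m_\pi}$ block-by-block while the paper bounds $\Norm{v}^p\leq \max_{\ii}\AbsoluteValue{a_{\ii}}^p\bigl(\sum_i\mu_i^p\bigr)^k$ for an arbitrary orbit-constant $v$; both are the same elementary estimate.
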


\begin{proof} We first show that the number of orbits of the diagonal
action of $S_{\infty}$ on $\NN^{k}$ equals the Bell number $B(k)$,
that is, the number of set partitions of $[k]$. To this end, we associate
to each $\ii=(i_{1},\dots,i_{k})\in\NN^{k}$ a set partition $\pi(\ii)$
of $[k]$ via the equivalence relation $\ell\sim m$ if and only if
$i_{\ell}=i_{m}$. For example, 
\[
\pi((1,2,2,3,1))=\pi((3,1,1,2,3))=\pi((4,1,1,2,4))=\{\{1,5\},\{2,3\},\{4\}\}.
\]
Note that $\pi(\ii)=\pi(\jj)$ if and only if $\ii=\sigma(\jj)$
for some $\sigma\in S_{\infty}$, which yields the claimed statement.
Hence, if $v=\sum_{\ii\in\NN^{k}}a_{\ii}v_{\ii}\in\CC^{\NN^{k}}$
satisfies $a_{\sigma(\ii)}=a_{\ii}$ for every $\ii\in\NN^{k}$ and
$\sigma\in S_{\infty}$, then the set $\{\AbsoluteValue{a_{\ii}}\mid\ii\in\NN^{k}\}$
has at most $B(k)$ elements, and is therefore bounded so that
\[
\Norm v^{p}=\sum_{\ii\in\NN^{k}}\AbsoluteValue{a_{\ii}}^{p}\mu_{\ii}^{p}\leq\max_{\ii\in\NN^{k}}\AbsoluteValue{a_{\ii}}^{p}\sum_{(i_{1},\dots,i_{k})\in\NN^{k}}\Biggl(\prod_{\ell=1}^{k}\mu_{i_{\ell}}^{p}\Biggr)=\max_{\ii\in\NN^{k}}\AbsoluteValue{a_{\ii}}^{p}\Biggl(\sum_{i=1}^{\infty}\mu_{i}^{p}\Biggr)^{k}<\infty.
\]
Thus, $(\overline{V^{\otimes k}})^{S_{\infty}}$ is $B(k)$-dimensional.
Since all $m_{\pi}$ are $S_{\infty}$-invariant, they are elements
of $\overline{V^{\otimes k}}$. The set $\{m_{\pi}\mid\pi\textnormal{ a set partition of }[k]\}$
is now easily seen to be a linearly independent subset of $(\overline{V^{\otimes k}})^{S_{\infty}}$
with $B(k)$ elements, which completes the proof.\end{proof}

\begin{thm}\label{thm:Centralizer_in_Lp_case} The centralizer of
$\CC S_{\infty}$ in $\mathcal{B}(\overline{V^{\otimes k}})$ is isomorphic
to the finite-dimensional algebra $U_{k}$ of uniform block permutations.\end{thm}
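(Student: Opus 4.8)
The plan is to mimic the finite-dimensional calculation in \eqref{eq:finite_commutation}, but to track carefully which matrices actually define \emph{bounded} operators. First I would let $A\in\mathcal{B}(\overline{V^{\otimes k}})$ and record its matrix $(A_{\ii}^{\jj})_{\ii,\jj\in\NN^{k}}$ via $A(v_{\ii})=\sum_{\jj}A_{\ii}^{\jj}v_{\jj}$. The same manipulation as in \eqref{eq:finite_commutation}---valid here because each $\sigma\in S_{\infty}$ permutes the Schauder basis $\{v_{\ii}\}$ and only moves finitely many of them, hence is a bounded invertible operator---shows that $\sigma A=A\sigma$ for all $\sigma\in S_{\infty}$ if and only if $A_{\sigma(\ii)}^{\sigma(\jj)}=A_{\ii}^{\jj}$ for all $\ii,\jj\in\NN^{k}$ and all $\sigma$; that is, the matrix entries are constant on the diagonal $S_{\infty}$-orbits of $\NN^{k}\times\NN^{k}\cong\NN^{2k}$. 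By the same orbit-counting argument as in Proposition~\ref{prop:S_infty_invariant_subspace}, these orbits are indexed by set partitions of $[k]\cup[k']$, i.e.\ by $k$-diagrams $d\in D_{k}$, so that, purely as a matrix, $A$ is a (possibly infinite) linear combination $A=\sum_{d\in D_{k}}c_{d}\,d$ with the diagram matrices $d_{\ii}^{\jj}$ of \eqref{eq:diagram_matrix_entries}.

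The remaining and substantive point is to determine exactly which such formal combinations are bounded operators on $\overline{V^{\otimes k}}=L^{p}(\NN^{k},\mu^{k})$. I would treat this diagram-by-diagram. A single diagram $d$ acts by $d\cdot v_{\ii}=\sum_{\jj}d_{\ii}^{\jj}v_{\jj}$; the key dichotomy is whether $d$ has a block contained entirely in $[k]$ (a ``top-only'' block) or a block contained entirely in $[k']$ (a ``bottom-only'' block), versus being a uniform block permutation. If $d$ has a bottom-only block, then for fixed $\ii$ the sum over $\jj$ with $d_{\ii}^{\jj}\ne 0$ is infinite and, because $\sum_i\mu_i^p<\infty$ is a convergence hypothesis not a smallness one, one checks that $\Norm{d\cdot v_{\ii}}$ is a fixed positive constant while $\Norm{v_{\ii}}\to 0$ along sequences pushing indices to infinity---so $d$ is unbounded. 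Dually, if $d$ has a top-only block, boundedness fails by testing on the invariant vectors $m_\pi$ of \eqref{eq:Monomial_basis_for_S_infty_invariant_subspace}: such a $d$ sends some $m_\pi$ to a vector whose coefficient sequence is constant $1$ on infinitely many indices, hence not $p$-summable against $\mu^k$, i.e.\ not in $\overline{V^{\otimes k}}$. By contrast, when $d$ is a uniform block permutation, $d\cdot v_{\ii}$ is a single basis vector $v_{\jj}$ with $\Norm{v_{\jj}}=\Norm{v_{\ii}}$ (the defining balance condition $|B\cap[k]|=|B\cap[k']|$ makes $d$ act as a measure-preserving bijection on indices), so $d$ is an isometry, in particular bounded; and any finite linear combination of these is bounded with norm at most $\sum|c_d|$. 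One then argues that in a convergent-as-an-operator combination $A=\sum_d c_d d$, the coefficient of every non-uniform diagram must vanish---e.g.\ by pairing $A$ against suitable test vectors ($v_{\ii}$ with all entries distinct to isolate uniform block \emph{permutations} from uniform \emph{set partitions}, and tuples or $m_\pi$'s to kill the non-uniform ones) and using that the $v_{\ii}$'s and the ranges of the uniform diagrams separate the surviving coefficients. This identifies the centralizer with $\CC$-span of the uniform block permutation diagrams, which is $U_k$ by definition; compatibility of composition with the diagram product (noting, as remarked after \eqref{eq:TP}, that concatenating uniform block permutations produces no middle components) makes this an algebra isomorphism.

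The main obstacle I anticipate is the boundedness bookkeeping for the bottom-only-block case: one must show not merely that each offending diagram is unbounded in isolation but that no infinite linear combination can conspire to cancel the unboundedness while leaving a nonzero non-uniform part---i.e.\ that the ``good'' part $U_k$ and the ``bad'' part are genuinely complementary inside the space of orbit-constant matrices. The clean way to handle this is to exhibit, for each non-uniform diagram $d_0$, a sequence of unit-norm test vectors in $\overline{V^{\otimes k}}$ on which every uniform block permutation and every \emph{other} diagram orbit-class acts with uniformly bounded image, while $d_0$'s image blows up; the asymmetry $\sum\mu_i^p<\infty$ together with $\mu_i>0$ is exactly what powers these estimates. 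A secondary, more routine check is that the correspondence $d\mapsto$ (operator) is injective on $U_k$ and multiplicative, which follows as in Theorem~\ref{thm:PkFullCentralizer} since the relevant index computations are finite and identical to the $P_k(n)$ case once $n$ is replaced by the (infinite) index set $\NN$.
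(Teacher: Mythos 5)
Your first step---that commuting with $S_{\infty}$ forces the matrix $(A_{\ii}^{\jj})$ to be constant on the diagonal $S_{\infty}$-orbits of $\NN^{k}\times\NN^{k}$, which are indexed by set partitions of $[k]\cup[k']$---matches the paper exactly. (Note in passing that there are only $B(2k)$ such orbits, so the linear combination is automatically finite; your worry about infinite combinations conspiring to cancel is moot.) The genuine gap is in the boundedness step: the dichotomy you build the case analysis on is false. A diagram can have \emph{every} block propagating and still fail to be a uniform block permutation; for $k=3$ take $d=\{\{1,2,1'\},\{3,2',3'\}\}$, which has no top-only and no bottom-only blocks but violates $\AbsoluteValue{B\cap[k]}=\AbsoluteValue{B\cap[k']}$. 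Your argument never excludes such diagrams from the centralizer, yet they must be excluded (this $d$ sends $v_{i}\otimes v_{j}\otimes v_{j}$ to $v_{i}\otimes v_{i}\otimes v_{j}$, with norm ratio $\mu_{i}/\mu_{j}\to\infty$). A second, smaller error: your test of the top-only case on $m_{\pi}$ concludes that a coefficient sequence ``constant $1$ on infinitely many indices'' is not $p$-summable against $\mu^{k}$---but $\sum_{i}\mu_{i}^{p}<\infty$ makes every bounded coefficient sequence lie in $L^{p}(\NN^{k},\mu^{k})$; that is precisely why the $m_{\pi}$ themselves belong to $\overline{V^{\otimes k}}$ (Proposition~\ref{prop:S_infty_invariant_subspace}). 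The correct obstruction there is that $\Norm{d\cdot v_{\ii}}$ stays bounded below by a positive constant while $\Norm{v_{\ii}}=\mu_{\ii}\to 0$. (Also, uniform block permutation diagrams are norm-$1$ projections, not isometries---they annihilate the $v_{\ii}$ whose indices violate the block-equality constraints---but boundedness is all you need.)

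The paper repairs all of this with one uniform estimate applied directly to the matrix entries of an arbitrary bounded $A$ in the centralizer, rather than diagram by diagram. For any $\ii_{0},\jj_{0}\in\NN^{k}$ and $\sigma\in S_{\infty}$ one has $\Norm{A}\geq\AbsoluteValue{A_{\ii_{0}}^{\jj_{0}}}\,\mu_{\sigma(\jj_{0})}/\mu_{\sigma(\ii_{0})}$; if some value $\ell$ occurs $L>0$ more times in $\ii_{0}$ than in $\jj_{0}$, choosing $\sigma$ to move $\ell$ far out while fixing the other entries multiplies this ratio by $(\mu_{\ell}/\mu_{\sigma(\ell)})^{L}$, which is unbounded since $\mu_{i}\to 0$. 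Hence $A_{\ii_{0}}^{\jj_{0}}=0$ unless $\ii_{0}$ and $\jj_{0}$ are rearrangements of each other, which is exactly the uniform-block condition on the corresponding orbit; this kills the all-propagating non-uniform diagrams your dichotomy misses, and because it is phrased in terms of the actual entries of $A$ (well defined via the coordinate functionals of the unconditional Schauder basis) it also disposes of any cancellation concern. You would need to replace your case analysis with an argument of this kind for the proof to go through.
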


\begin{proof} Recall that every $A\in\mathcal{B}(\overline{V^{\otimes k}})$
is determined by its images on $\{v_{\ii}\}_{\ii\in\NN^{k}}$, which
we arrange in a matrix $(A_{\ii}^{\jj})_{\ii,\jj\in\NN^{k}}\in\CC^{\NN^{k}\times\NN^{k}}$
such that for each $\ii\in\NN^{k}$, 
\[
A(v_{\ii})=\sum_{\jj\in\NN^{k}}A_{\ii}^{\jj}v_{\jj}\in\overline{V^{\otimes k}}.
\]
Similarly to~\eqref{eq:finite_commutation}, if $\sigma\in S_{\infty}$
and $A\sigma=\sigma A$ as elements in $\mathcal{B}(\overline{V^{\otimes k}})$,
then for each $\ii\in\NN^{k}$, 
\[
\sigma A(v_{\ii})=\sum_{\jj\in\NN^{k}}A_{\ii}^{\jj}v_{\sigma(\jj)}\qquad\textnormal{equals}\qquad A\sigma(v_{\ii})=\sum_{\jj\in\NN^{k}}A_{\sigma(\ii)}^{\sigma(\jj)}v_{\sigma(\jj)}.
\]
Hence, $A$ is in the centralizer of $\CC S_{\infty}$ if and only
if 
\begin{equation}
A_{\sigma(\ii)}^{\sigma(\jj)}=A_{\ii}^{\jj}\quad\textnormal{for every }\ii,\jj\in\NN^{k}\textnormal{ and }\sigma\in\mathbb{S_{\infty}}.\label{eq:Condition_for_commuting_with_S_infty}
\end{equation}
Analogous to the proof of Proposition~\ref{prop:S_infty_invariant_subspace},
one finds that the diagonal action of $S_{\infty}$ on $\NN^{k}\times\NN^{k}$,
given by $\sigma(\ii,\jj)=(\sigma(\ii),\sigma(\jj))$ for $\ii,\jj\in\NN^{k}$,
has a finite number of orbits indexed by set partitions of $[k]\cup[k']$.
In particular, $(A_{\ii}^{\jj})_{\ii,\jj\in\NN^{k}}$ has uniform
entries on $S_{\infty}$-orbits if and only if it is a linear combination
of the finitely many diagram matrices defined by~\eqref{eq:diagram_matrix_entries}.
It remains to show that the combinations that yield bounded operators
correspond to elements of the algebra $U_{k}$. With this in mind,
let $A$ be in the centralizer of $\CC S_{\infty}$ in $\mathcal{B}(\overline{V^{\otimes k}})$.
For each $\ii_{0},\jj_{0}\in\NN^{k}$ and $\sigma\in S_{\infty}$,
we have 
\[
\Norm{A(v_{\sigma(\ii_{0})})}=\Norm{A\sigma(v_{\ii_{0}})}=\Norm{\sigma A(v_{\ii_{0}})}=\NormBig{\sum_{\jj\in\NN^{k}}A_{\ii_{0}}^{\jj}v_{\sigma(\jj)}}\geq\Norm{A_{\ii_{0}}^{\jj_{0}}v_{\sigma(\jj_{0})}}=\AbsoluteValue{A_{\ii_{0}}^{\jj_{0}}}\mu_{\sigma(\jj_{0})}.
\]
In particular, 
\[
\Norm A\geq\frac{\Norm{A(v_{\sigma(\ii_{0})})}}{\Norm{v_{\sigma(\ii_{0})}}}\geq\AbsoluteValue{A_{\ii_{0}}^{\jj_{0}}}\frac{\mu_{\sigma(\jj_{0})}}{\mu_{\sigma(\ii_{0})}}.
\]
If now some $\ell\in[k]$ appears $L>0$ times more often in $\ii_{0}$
than in $\jj_{0}$, and if $\sigma\in S_{\infty}$ fixes all entries
in $\ii_{0}$ and $\jj_{0}$ different from $\ell$, then 
\[
\frac{\mu_{\sigma(\jj_{0})}}{\mu_{\sigma(\ii_{0})}}=\left(\frac{\mu_{\ell}}{\mu_{\sigma(\ell)}}\right)^{L}\frac{\mu_{\jj_{0}}}{\mu_{\ii_{0}}}.
\]
Since $(\mu_{i})_{i\in\NN}\in\ell^{p}$, we have $\mu_{i}\to0$ for
$i\to\infty$ so that this quotient can be made arbitrarily large.
Hence, $A_{\ii_{0}}^{\jj_{0}}=0$ whenever $\ii_{0}$ and $\jj_{0}$
are not rearrangements of each other. On the other hand, every matrix
$(A_{\ii}^{\jj})_{\ii,\jj\in\NN^{k}}\in\CC^{\NN^{k}\times\NN^{k}}$
with this property that also satisfies~\eqref{eq:Condition_for_commuting_with_S_infty}
is a finite linear combination of diagram matrices $(d_{\ii}^{\jj})_{\ii,\jj\in\NN^{k}}$
as in~\eqref{eq:diagram_matrix_entries} that correspond to elements
of $U_{k}$. Note that for every such $(d_{\ii}^{\jj})_{\ii,\jj\in\NN^{k}}$
and $\ii_{0}\in\NN^{k}$, the vector $d(v_{\ii_{0}})=\sum_{\jj\in\NN^{k}}d_{\ii_{0}}^{\jj}v_{\jj}$
is either $0$ or equal to $v_{\jj_{0}}$ for some rearrangement $\jj_{0}$
of $\ii_{0}$, and $d(v_{\ii_{0}})=d(v_{\ii_{1}})\neq0$ implies $\ii_{0}=\ii_{1}$.
In particular, 
\[
d\colon\overline{V^{\otimes k}}\to\overline{V^{\otimes k}}\qquad\textnormal{given by}\qquad d\Bigl(\sum_{\ii\in\NN^{k}}a_{\ii}v_{\ii}\Bigr)=\sum_{\ii\in\NN^{k}}a_{\ii}d(v_{\ii})
\]
is a well-defined projector of norm $1$, which completes the proof.
\end{proof}

\begin{remark} If $p=2$, then the norm on $\overline{V^{\otimes k}}=L^{2}(\NN^{k},\mu^{k})$
originates from an inner product, and $\overline{V^{\otimes k}}$
is a Hilbert space. However, the action of $S_{\infty}$ would be
unitary only if $\mu^{k}$ was a multiple of the counting measure
on $\NN^{k}$, that is, if $\overline{V^{\otimes k}}\cong\ell^{2}$,
in which case $(\overline{V^{\otimes k}})^{S_{\infty}}$ would be
trivial. We refer to~\cite{Pickrell1988, Okounkov1997} and references
therein for the study of unitary representations of~$S_{\infty}$.\end{remark}

\begin{remark}\label{rk:S_infty_not_fully_decomposable} The group algebra $\CC S_{\infty}$
does not satisfy the double commutant property in $\cB(\overline{V^{\otimes k}})$,
that is, the centralizer of $U_{k}$ in $\cB(\overline{V^{\otimes k}})$
strictly contains $\CC S_{\infty}$. For example, when $k=1$, the
centralizer of $\CC S_{\infty}$ in $\cB(\overline{V^{\otimes1}})=\cB(V)$
is $U_{1}=\CC\{\textnormal{id}_{V}\}$; but the centralizer of $\CC\{\textnormal{id}_{V}\}$
is all of $\cB(V)$. This results from the fact that the action of
$S_{\infty}$ on $\cB(V)$ is not semisimple. In fact, the $S_{\infty}$-invariant
subspace $\CC\{\sum_{i\in\NN}v_{i}\}$ does not have closed $\CC S_{\infty}$-invariant
complements so that all projection operators with range $\CC\{\sum_{i\in\NN}v_{i}\}$
are unbounded. In particular, $V=\overline{V^{\otimes1}}$ is not
fully decomposable as a module for $\CC S_{\infty}$. A similar argument
applies for $k>1$.\end{remark}

\begin{remark}\label{rk:Uk_as_module_for_S_infty} Despite the delicate
properties of the action of $S_{\infty}$ on $\overline{V^{\otimes k}}$
mentioned in the previous remark, Proposition~\ref{prop:S_infty_invariant_subspace}
frames the vector space $(\overline{V^{\otimes k}})^{S_{\infty}}$
of $S_{\infty}$-invariant vectors as a natural module for the algebra
$U_{k}$ of uniform block permutations. In fact, in~\cite{AguiarOrellana2008},
Aguiar and Orellana study the combinatorial Hopf algebra of uniform
block permutations, and find that the ring of symmetric functions
in non-commuting variables naturally lives in their algebra. A priori,
this may be surprising since in their setting, $U_{k}$ arises as
the centralizer of the seemingly unrelated complex reflection group
$C_{r}\wr S_{n}$ on a permutation-like module, as shown in\emph{~}\cite{Tanabe1997}.
However, there is a subtlety in the centralizer relationship depending
on the values of $k$ and $r$. One can use the action of $C_{r}\wr S_{n}$
on $\CC^{n}$ as defined in\emph{~}\cite[Section 3.1]{AguiarOrellana2008},
calculate the corresponding commutation conditions as in \eqref{eq:finite_commutation},
and take the limit as $r,n\to\infty$ to obtain the same commutation
conditions as in the proof of Theorem~\ref{thm:Centralizer_in_Lp_case}.
In light of this observation, the results of Aguiar and Orellana connecting
the Hopf algebra of uniform block permutations and the ring of symmetric
functions in non-commuting variables appear natural.\end{remark}

\subsection{Bounded sequences \label{subsec:l_infinity}}

In the following, we consider the Banach space of bounded sequences
\[
\ell^{\infty}=\biggl\{ v=(a_{1},a_{2},\ldots)\in\CC^{\NN}\,\biggl|\,\Norm v_{\infty}=\sup_{i\in\NN}\AbsoluteValue{a_{i}}<\infty\biggr\}.
\]
Recall that $\ell^{\infty}$ is not \emph{separable}, meaning that
it has no countable dense subsets. In particular, $\ell^{\infty}$
has no countable Schauder bases. For example, if $\{v_{i}\}_{i\in\NN}$
is given by $v_{i}=(\delta_{ij})_{j\in\NN}$, then $\{\sum_{i=1}^{\ell}v_{i}\}_{\ell\in\NN}$
is not a Cauchy sequence even though $(1,1,1,\ldots)\in\ell^{\infty}$.
Therefore, we no longer use the notation $\sum_{i\in\NN}a_{i}v_{i}$
for $(a_{1},a_{2},\ldots)\in\CC^{\NN}$. Following the previous section,
we regard $(\ell^{\infty})^{\otimes k}$ as a subspace of 
\[
\ell^{\infty}(\NN^{k})=\biggl\{ v=(a_{\ii})_{\ii\in\mathbb{\NN}^{k}}\in\CC^{\NN^{k}}\,\biggl|\,\Norm v_{\infty}=\sup_{\ii\in\NN^{k}}\AbsoluteValue{a_{\ii}}<\infty\biggr\}
\]
such that 
\[
v_{\ii}=(\delta_{\ii\jj})_{\jj\in\NN^{k}}=v_{i_{1}}\otimes\cdots\otimes v_{i_{k}},\textnormal{ where }\delta_{\ii\jj}=\begin{cases}
1 & \textnormal{if }\ii=\jj\\
0 & \textnormal{otherwise.}
\end{cases}
\]
Note that $(\ell^{\infty})^{\otimes k}$ is not dense in $\ell^{\infty}(\NN^{k})$
if $k>1$. Again, $\{v_{\ii}\}_{\ii\in\NN^{k}}$ is not a Schauder
basis. Thus, we only consider operators $A$ on $\ell^{\infty}(\NN^{k})$
which are determined by their associated matrix with entries $A_{\ii}^{\jj}=(A(v_{\ii}))_{\jj}$
for $\ii,\jj\in\NN^{k}$. To this end, let 
\[
\BoundedOperators_{\textnormal{Mat}}(\ell^{\infty}(\NN^{k}))=\biggl\{ A=(A_{\ii}^{\jj})_{\ii,\jj\in\NN^{k}}\in\CC^{\NN^{k}\times\NN^{k}}\,\biggl|\,\Norm A_{\textnormal{Mat}}=\sup_{\jj\in\NN^{k}}\Bigl\{\sum_{\ii\in\NN^{k}}\AbsoluteValue{A_{\ii}^{\jj}}\Bigr\}<\infty\biggr\}.
\]
If $A=(A_{\ii}^{\jj})_{\ii,\jj\in\NN^{k}}\in\BoundedOperators_{\textnormal{Mat}}(\ell^{\infty}(\NN^{k}))$
and $v=(b_{\ii})_{\ii\in\NN^{k}}\in\ell^{\infty}(\NN^{k})$, we define
for each $\jj\in\NN^{k}$, 
\[
(A(v))_{\jj}=\sum_{\ii\in\NN^{k}}A_{\ii}^{\jj}b_{\ii},
\]
which converges absolutely since 
\[
\sum_{\ii\in\NN^{k}}\AbsoluteValue{A_{\ii}^{\jj}}\AbsoluteValue{b_{\ii}}\leq\sum_{\ii\in\NN^{k}}\AbsoluteValue{A_{\ii}^{\jj}}\Norm v_{\infty}\leq\Norm A_{\textnormal{Mat}}\Norm v_{\infty}.
\]
Hence, $A(v)\in\ell^{\infty}(\NN^{k})$, and $A$ gives rise to an
operator on $\ell^{\infty}(\NN^{k})$ with norm bounded by $\Norm A_{\textnormal{Mat}}$.

\begin{lem}[{\cite[Sec.\ 13 Satz 4]{KoetheToeplitz1934}}] The norm $\Norm A_{\infty}$ of each $A\in\BoundedOperators_{\textnormal{Mat}}(\ell^{\infty}(\NN^{k}))$
as an operator on $\ell^{\infty}(\NN^{k})$ equals $\Norm A_{\textnormal{Mat}}$,
and $\BoundedOperators_{\textnormal{Mat}}(\ell^{\infty}(\NN^{k}))$
is a subalgebra of $\BoundedOperators(\ell^{\infty}(\NN^{k}))$.\end{lem}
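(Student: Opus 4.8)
The plan is to treat the two assertions separately, each of which reduces to rearranging an absolutely convergent double sum. For the norm identity, note that the paragraph preceding the lemma already yields $\Norm{A}_{\infty}\leq\Norm{A}_{\textnormal{Mat}}$, since $\AbsoluteValue{(A(v))_{\jj}}=\AbsoluteValueBig{\sum_{\ii\in\NN^{k}}A_{\ii}^{\jj}b_{\ii}}\leq\Norm{A}_{\textnormal{Mat}}\Norm{v}_{\infty}$ for $v=(b_{\ii})_{\ii\in\NN^{k}}$. For the reverse inequality I would fix $\jj_{0}\in\NN^{k}$ and test $A$ against the sequence $v=(b_{\ii})_{\ii\in\NN^{k}}$ defined by $b_{\ii}=\overline{A_{\ii}^{\jj_{0}}}/\AbsoluteValue{A_{\ii}^{\jj_{0}}}$ when $A_{\ii}^{\jj_{0}}\neq0$ and $b_{\ii}=0$ otherwise. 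Then $\Norm{v}_{\infty}\leq1$, the series $\sum_{\ii}A_{\ii}^{\jj_{0}}b_{\ii}=\sum_{\ii}\AbsoluteValue{A_{\ii}^{\jj_{0}}}$ converges (it is bounded by $\Norm{A}_{\textnormal{Mat}}<\infty$), and it is exactly the $\jj_{0}$-coordinate of $A(v)$, so $\Norm{A}_{\infty}\geq\Norm{A(v)}_{\infty}\geq\sum_{\ii}\AbsoluteValue{A_{\ii}^{\jj_{0}}}$; taking the supremum over $\jj_{0}$ gives $\Norm{A}_{\infty}\geq\Norm{A}_{\textnormal{Mat}}$.

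For the subalgebra statement, closure under scalar multiples and sums is immediate from the triangle inequality for $\Norm{\cdot}_{\textnormal{Mat}}$, and the identity matrix $(\delta_{\ii\jj})$ clearly lies in $\BoundedOperators_{\textnormal{Mat}}(\ell^{\infty}(\NN^{k}))$, so the work is in closure under composition. Given $A,B\in\BoundedOperators_{\textnormal{Mat}}(\ell^{\infty}(\NN^{k}))$ and $v=(b_{\ii})_{\ii}\in\ell^{\infty}(\NN^{k})$ (so that $B(v)\in\ell^{\infty}(\NN^{k})$ by the remark before the lemma), I would compute for each $\jj\in\NN^{k}$
\[
(A(B(v)))_{\jj}=\sum_{\kk}A_{\kk}^{\jj}(B(v))_{\kk}=\sum_{\kk}A_{\kk}^{\jj}\sum_{\ii}B_{\ii}^{\kk}b_{\ii}=\sum_{\ii}\Bigl(\sum_{\kk}A_{\kk}^{\jj}B_{\ii}^{\kk}\Bigr)b_{\ii},
\]
where the interchange of the sums over $\kk$ and $\ii$ is legitimate because $\sum_{\kk}\sum_{\ii}\AbsoluteValue{A_{\kk}^{\jj}}\AbsoluteValue{B_{\ii}^{\kk}}\AbsoluteValue{b_{\ii}}\leq\Norm{v}_{\infty}\Norm{A}_{\textnormal{Mat}}\Norm{B}_{\textnormal{Mat}}<\infty$. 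Hence the composite $AB$ is again a matrix operator, with entries $(AB)_{\ii}^{\jj}=\sum_{\kk}A_{\kk}^{\jj}B_{\ii}^{\kk}$, and
\[
\Norm{AB}_{\textnormal{Mat}}=\sup_{\jj}\sum_{\ii}\AbsoluteValueBig{\sum_{\kk}A_{\kk}^{\jj}B_{\ii}^{\kk}}\leq\sup_{\jj}\sum_{\kk}\AbsoluteValue{A_{\kk}^{\jj}}\sum_{\ii}\AbsoluteValue{B_{\ii}^{\kk}}\leq\Norm{A}_{\textnormal{Mat}}\Norm{B}_{\textnormal{Mat}}<\infty,
\]
again by rearranging a series of nonnegative terms. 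This places $AB$ in $\BoundedOperators_{\textnormal{Mat}}(\ell^{\infty}(\NN^{k}))$ and completes the verification that it is a subalgebra of $\BoundedOperators(\ell^{\infty}(\NN^{k}))$.

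The only place that needs genuine care — the "hard part", such as it is — is the bookkeeping of these two Fubini/Tonelli interchanges: one must confirm that the relevant double sums of nonnegative terms are finite, which is precisely where the hypothesis $\Norm{A}_{\textnormal{Mat}},\Norm{B}_{\textnormal{Mat}}<\infty$ enters, so that both the rearrangement in the norm computation and the identification of the operator $AB$ with the formal matrix product are justified. Everything else is a routine computation.
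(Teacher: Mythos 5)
Your proposal is correct and follows essentially the same route as the paper: the upper bound $\Norm{A}_{\infty}\leq\Norm{A}_{\textnormal{Mat}}$ from the preceding computation, the lower bound via unit test vectors realizing $\sum_{\ii}\AbsoluteValue{A_{\ii}^{\jj_{0}}}$ in the $\jj_{0}$-coordinate, and closure under composition via the Tonelli/Fubini interchange justified by $\Norm{A}_{\textnormal{Mat}}\Norm{B}_{\textnormal{Mat}}<\infty$. The only cosmetic difference is that you make the test vector $b_{\ii}=\overline{A_{\ii}^{\jj_{0}}}/\AbsoluteValue{A_{\ii}^{\jj_{0}}}$ explicit where the paper merely asserts its existence.
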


\begin{proof} In order to show $\Norm A_{\infty}\geq\Norm A_{\textnormal{Mat}}$,
note that for each $\jj\in\NN^{k}$ we can find $w=(b_{\ii})_{\ii\in\NN^{k}}$
with $\Norm w_{\infty}=1$ and 
\[
(A(w))_{\jj}=\sum_{\ii\in\NN^{k}}A_{\ii}^{\jj}b_{\ii}=\sum_{\ii\in\NN^{k}}\AbsoluteValue{A_{\ii}^{\jj}}.
\]
Hence, if $\{\jj_{\ell}\}_{\ell\in\NN}$ is a sequence in $\NN^{k}$
such that $\sum_{\ii\in\NN^{k}}\AbsoluteValue{A_{\ii}^{\jj_{\ell}}}\to\Norm A_{\textnormal{Mat}}$,
then we can find a sequence of unit vectors $\{w_{\ell}\}_{\ell\in\NN}$
in $\ell^{\infty}(\NN^{k})$ such that $\sup_{\ell\in\NN}\Norm{A(w_{\ell})}_{\infty}\geq\Norm A_{\textnormal{Mat}}$,
showing that $\Norm A_{\infty}\geq\Norm A_{\textnormal{Mat}}$. As
for the second statement, $\BoundedOperators_{\textnormal{Mat}}(\ell^{\infty}(\NN^{k}))$
is easily seen to be a vector space, and it suffices to show that
it is closed under composition. If $A=(A_{\ii}^{\jj})_{\ii,\jj\in\NN^{k}},B=(B_{\ii}^{\jj})_{\ii,\jj\in\NN^{k}}\in\BoundedOperators_{\textnormal{Mat}}(\ell^{\infty}(\NN^{k}))$,
let $C=(C_{\ii}^{\jj})_{\ii,\jj\in\NN^{k}}$ have entries 
\[
C_{\ii}^{\kk}=\sum_{\jj\in\NN^{k}}A_{\jj}^{\kk}B_{\ii}^{\jj},
\]
which converge absolutely since 
\[
\sum_{\jj\in\NN^{k}}\AbsoluteValue{A_{\jj}^{\kk}}\AbsoluteValue{B_{\ii}^{\jj}}\leq\sum_{\jj\in\NN^{k}}\AbsoluteValue{A_{\jj}^{\kk}}\Norm B\leq\Norm A\Norm B.
\]
Moreover, for each $\kk\in\NN^{k}$, 
\[
\sum_{\ii\in\NN^{k}}\AbsoluteValue{C_{\ii}^{\kk}}\leq\sum_{\ii\in\NN^{k}}\sum_{\jj\in\NN^{k}}\AbsoluteValue{A_{\jj}^{\kk}}\AbsoluteValue{B_{\ii}^{\jj}}=\sum_{\jj\in\NN^{k}}\AbsoluteValue{A_{\jj}^{\kk}}\sum_{\ii\in\NN^{k}}\AbsoluteValue{B_{\ii}^{\jj}}\leq\sum_{\jj\in\NN^{k}}\AbsoluteValue{A_{\jj}^{\kk}}\Norm B\leq\Norm A\Norm B.
\]
Hence, $C\in\BoundedOperators_{\textnormal{Mat}}(\ell^{\infty}(\NN^{k}))$.
Unsurprisingly, $C$ equals the composition of $A$ and $B$ since
for every $v=(c_{\jj})_{\jj\in\NN^{k}}\in\ell^{\infty}(\NN^{k})$
and $\kk\in\NN^{k}$, 
\[
(C(v))_{\kk}=\sum_{\ii\in\NN^{k}}C_{\ii}^{\kk}c_{\ii}=\sum_{\ii\in\NN^{k}}\sum_{\jj\in\NN^{k}}A_{\jj}^{\kk}B_{\ii}^{\jj}c_{\ii}=\sum_{\jj\in\NN^{k}}A_{\jj}^{\kk}(B(v))_{\jj}=(A(B(v)))_{\kk}.
\]

\end{proof}

We define a norm-preserving action of $S_{\infty}$ on $\ell^{\infty}(\NN^{k})$
by 
\[
\sigma\cdot(a_{\ii})_{\ii\in\NN^{k}}=(a_{\sigma^{-1}(\ii)})_{\ii\in\NN^{k}}\qquad\textnormal{for }\sigma\in S_{\infty},
\]
so that for every $\ii\in\NN^{k}$, 
\[
\sigma\cdot v_{\ii}=v_{\sigma(\ii)}.
\]

\begin{thm}\label{thm:Centralizer_in_l_infty} The vector space $(\ell^{\infty}(\NN^{k}))^{S_{\infty}}$
of $S_{\infty}$-invariant elements in $\ell^{\infty}(\NN^{k})$ has
the functions $\{m_{\pi}\mid\pi\textnormal{ a set partition of }[k]\}$
defined by~\eqref{eq:Monomial_basis_for_S_infty_invariant_subspace}
as a basis. Moreover, the centralizer of $\CC S_{\infty}$ in $\BoundedOperators_{\textnormal{Mat}}(\ell^{\infty}(\NN^{k}))$
is isomorphic to the finite-dimensional bottom-propagating partition
algebra $BP_{k}$.\end{thm}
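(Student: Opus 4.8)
The plan is to mirror the structure of the proof of Theorem~\ref{thm:Centralizer_in_Lp_case}, but tracking the new norm $\Norm{\cdot}_{\textnormal{Mat}}$ and the different cutoff it produces. First I would establish the statement about invariants. The argument from Proposition~\ref{prop:S_infty_invariant_subspace} shows that a vector $v = (a_{\ii})_{\ii \in \NN^k}$ is $S_\infty$-invariant if and only if $a_{\ii}$ depends only on the set partition $\pi(\ii)$ of $[k]$; there are $B(k)$ such partitions, so any invariant $v$ takes finitely many values and is therefore automatically in $\ell^\infty(\NN^k)$. Hence $(\ell^\infty(\NN^k))^{S_\infty}$ is $B(k)$-dimensional, and since each $m_\pi$ is $S_\infty$-invariant and lies in $\ell^\infty(\NN^k)$ (its entries are $0$ or $1$), and the $m_\pi$ are clearly linearly independent, they form a basis.

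Next I would identify the centralizer. Exactly as in~\eqref{eq:finite_commutation} and~\eqref{eq:Condition_for_commuting_with_S_infty}, an operator $A = (A_{\ii}^{\jj})_{\ii,\jj \in \NN^k} \in \BoundedOperators_{\textnormal{Mat}}(\ell^\infty(\NN^k))$ commutes with every $\sigma \in S_\infty$ if and only if $A_{\sigma(\ii)}^{\sigma(\jj)} = A_{\ii}^{\jj}$ for all $\ii, \jj, \sigma$, i.e.\ the matrix entries are constant on the finitely many orbits of the diagonal $S_\infty$-action on $\NN^k \times \NN^k$, which are indexed by set partitions of $[k] \cup [k']$. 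So any such $A$ is a finite linear combination $A = \sum_{d} c_d \, (d_{\ii}^{\jj})_{\ii,\jj}$ of diagram matrices from~\eqref{eq:diagram_matrix_entries}. The remaining task is to determine which linear combinations actually lie in $\BoundedOperators_{\textnormal{Mat}}(\ell^\infty(\NN^k))$, i.e.\ have finite $\Norm{\cdot}_{\textnormal{Mat}} = \sup_{\jj} \sum_{\ii} \AbsoluteValue{A_{\ii}^{\jj}}$. Here is where the asymmetry with the $\ell^p$ case enters: the relevant finiteness is now a column-sum condition. Fix a diagram $d$ and a column index $\jj_0 \in \NN^k$; the entries $d_{\ii}^{\jj_0}$ that equal $1$ are exactly those $\ii$ obtained from $\jj_0$ by freely choosing the values on blocks of $d$ that are isolated to the top row (the top vertices), while the values on propagating blocks are forced by $\jj_0$. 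If $d$ has a block isolated to the top, the column sum $\sum_{\ii} d_{\ii}^{\jj_0}$ is infinite, so such $d$ cannot appear; conversely if $d$ has no block isolated to the top row (equivalently, no purely-top block), every column of $d_{\ii}^{\jj}$ has at most one nonzero entry, giving $\Norm{d}_{\textnormal{Mat}} = 1$. I should also check the dual direction: a block isolated to the \emph{bottom} row gives, for each $\jj_0$ meeting that block inconsistently, entry $0$; but the constraint imposed by bounded-\emph{ness} of $A$ on $\ell^\infty(\NN^k)$ is only the column condition, so blocks isolated to the bottom are harmless. By definition~\eqref{eq:TP} and its bottom analogue, ``no block isolated to the top row'' is precisely the defining condition of $BP_k$. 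Hence the centralizer, as a vector space, is spanned by the diagram matrices of $d \in BP_k$.

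Finally I would verify that this vector-space identification is an algebra isomorphism. As noted in Section~\ref{sec:diagram_algebras}, concatenation of $BP_k$-diagrams never produces middle components, so the diagram product is $x$-independent; and one checks, as in the display for $C$ in the preceding lemma, that the matrix product $(AB)_{\ii}^{\kk} = \sum_{\jj} A_{\jj}^{\kk} B_{\ii}^{\jj}$ of two diagram matrices is the diagram matrix of the concatenation $d_1 * d_2$. (The sum is finite because each column of $B$ has a single nonzero entry.) Therefore $d \mapsto (d_{\ii}^{\jj})_{\ii,\jj}$ is an algebra homomorphism $BP_k \to \BoundedOperators_{\textnormal{Mat}}(\ell^\infty(\NN^k))$, injective since distinct diagrams have distinct matrices, with image the full centralizer; this gives the claimed isomorphism. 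I expect the main obstacle to be the bookkeeping in the column-sum computation — precisely delineating which diagrams survive and double-checking that boundedness imposes \emph{only} the top-row condition (and not also a bottom-row condition, which would wrongly cut us down to $U_k$); the $\ell^\infty$ norm's one-sidedness is exactly what distinguishes this case from Theorem~\ref{thm:Centralizer_in_Lp_case}, and the argument must make that explicit.
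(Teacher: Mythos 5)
Your overall strategy is the paper's: establish the invariants as in Proposition~\ref{prop:S_infty_invariant_subspace}, use the commutation relation to reduce the centralizer to linear combinations of diagram matrices, and then decide which diagrams have finite $\Norm{\cdot}_{\textnormal{Mat}}$. The first two steps are fine, and the closing check that matrix multiplication realizes diagram concatenation is a worthwhile addition that the paper leaves implicit. However, the step you yourself flag as the crux --- delineating which diagrams survive --- is carried out with the roles of top and bottom interchanged, and you land on the correct answer only because you then also misstate the definition of $BP_k$: you assert that ``no block isolated to the top row'' is the defining condition of $BP_k$, whereas by \eqref{eq:TP} and the sentence following it, that is the defining condition of $TP_k$; $BP_k$ is spanned by diagrams with no block isolated to the \emph{bottom} row. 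The two errors cancel, but each intermediate claim as written is false.

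Concretely, in the paper's conventions the subscript of $A_{\ii}^{\jj}$ is the assignment to the bottom (primed) vertices and is the input index, since $A(v_{\ii})=\sum_{\jj}A_{\ii}^{\jj}v_{\jj}$; the superscript $\jj$ is the assignment to the top vertices. Thus $\Norm{A}_{\textnormal{Mat}}=\sup_{\jj}\sum_{\ii}\AbsoluteValue{A_{\ii}^{\jj}}$ fixes a labeling $\jj$ of the top row and sums over labelings $\ii$ of the bottom row. For a diagram matrix, the bottom vertices lying in propagating blocks have their values forced by $\jj$, blocks isolated to the top row merely impose a consistency condition on $\jj$, and it is the blocks isolated to the \emph{bottom} row whose values are free and make $\sum_{\ii}d_{\ii}^{\jj}$ infinite. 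For instance, with $k=2$ the diagram $\{\{1,2,1'\},\{2'\}\}$ has no isolated top block, yet for $\jj=(j,j)$ one has $d_{\ii}^{\jj}=1$ for the infinitely many $\ii=(j,m)$, $m\in\NN$, so $\Norm{d}_{\textnormal{Mat}}=\infty$; conversely $\{\{1\},\{2,1',2'\}\}$ has an isolated top block but each column sum is at most $1$. So the surviving diagrams are exactly those with no block isolated to the bottom row, i.e.\ $BP_k$, as the paper states. Your parenthetical that imposing both row conditions ``would wrongly cut us down to $U_k$'' is also inaccurate: it would cut down to the span of diagrams all of whose blocks are propagating, which strictly contains $U_k$ (in Theorem~\ref{thm:Centralizer_in_Lp_case}, $U_k$ arises from the separate norm estimate forcing $\ii_0$ and $\jj_0$ to be rearrangements of each other, not from propagation alone). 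With the top/bottom bookkeeping corrected, your argument goes through and coincides with the paper's.
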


\begin{proof} The first statement is proven in essentially the same
way as Proposition~\ref{prop:S_infty_invariant_subspace}. As for
the second statement, we follow the proof of Theorem~\ref{thm:Centralizer_in_Lp_case},
and note that $A=(A_{\ii}^{\jj})_{\ii,\jj\in\NN^{k}}\in\BoundedOperators_{\textnormal{Mat}}(\ell^{\infty}(\NN^{k}))$
is in the centralizer of $\CC S_{\infty}$ if and only if it is a
linear combination of the finitely many diagram matrices defined by~\eqref{eq:diagram_matrix_entries}.
The claim now follows from the observation that if $A=(A_{\ii}^{\jj})_{\ii,\jj\in\NN^{k}}$
is such a linear combination, then $\{\ii\in\NN^{k}\mid A_{\ii}^{\jj}\neq0\}$
is finite for every $\jj\in\NN^{k}$ if and only if $A$ is a linear
combination of matrices corresponding to diagrams with no blocks isolated
to the bottom row, that is, diagrams in $BP_{k}$. \end{proof}

\begin{remark} The case surrounding the countable-dimensional vector space in Section \ref{sec:SamSnowden} can be reconsidered in a similar fashion. Namely, let $V=c_{00}\subset\ell^{\infty}$
be the set of all sequences $(a_{1},a_{2},\ldots)\in\CC^{\NN}$ whose
support $\{i\in\NN\mid a_{i}\neq0\}$ is finite. Then, $V^{\otimes k}$
can be regarded as a subspace of $\ell^{\infty}(\NN^{k})$, and it
has $\{v_{\ii}\}_{\ii\in\NN^{k}}$ as a countable Hamel basis. Thus,
every $A\in\End(V^{\otimes k})$ is uniquely determined by $\{A(v_{\ii})\}_{\ii\in\NN^{k}}$,
whose elements are arranged as a matrix $(A_{\ii}^{\jj})_{\ii,\jj\in\NN^{k}}\in\CC^{\NN^{k}\times\NN^{k}}$
so that for $\ii,\jj\in\NN^{k}$ and $v=(b_{\ii})_{\ii\in\NN^{k}}\in V^{\otimes k}$,
\[
A(v_{\ii})=\sum_{\jj\in\NN^{k}}A_{\ii}^{\jj}v_{\jj},\qquad\textnormal{giving}\qquad(A(v))_{\jj}=\sum_{\ii\in\NN^{k}}A_{\ii}^{\jj}b_{\ii}.
\]
In particular, a diagram matrix $(d_{\ii}^{\jj})_{\ii,\jj\in\NN^{k}}$
as in~\eqref{eq:diagram_matrix_entries} corresponds to an element
of $\End(V^{\otimes k})$ if and only if $\{\jj\in\NN^{k}\mid d_{\ii}^{\jj}\neq0\}$
is finite for every $\ii\in\NN^{k}$. This happens precisely if the
corresponding diagram has no blocks isolated to its top row, for which
reason $\mbox{End}_{S_{\infty}}(V^{\otimes k})\cong TP_{k}$.\end{remark}

\begin{remark} As in Section~\ref{subsec:p_Power_summable_sequences},
$S_{\infty}$ is strictly contained in its double commutant; see Remark~\ref{rk:S_infty_not_fully_decomposable}.
Similarly to Remark~\ref{rk:Uk_as_module_for_S_infty}, the vector
space $(\ell^{\infty}(\NN^{k}))^{S_{\infty}}$ becomes a natural module
for the bottom-propagating partition algebra $BP_{k}$,
and it has a basis consisting of elements which may be identified
with monomial symmetric functions. Of course, the entire partition
algebra $P_{k}(x)$ has a natural action on the set of set partitions
of $[k]$ obtained by identifying each set partition $\pi$ with the
diagram $d=\pi\cup\{\{1'\},\dots,\{k'\}\}$; these diagrams form a
left ideal in $P_{k}(x)$. One might expect an action of $P_{k}(x)$
on the basis $\{m_{\pi}\mid\pi\textnormal{ a set partition of }[k]\}$
of $(\ell^{\infty}(\NN^{k}))^{S_{\infty}}$. Recall that in Theorem~\ref{thm:PkFullCentralizer},
$x$ must be specialized to the number $n$ of basis vectors on
which $S_{n}$ acts. The transition $x\to\infty$ can be carried out
rigorously only if middle components are avoided in diagram concatenations,
in this case by restricting to $BP_{k}$.\end{remark}

\bibliographystyle{amsalpha}
\bibliography{S_infty}

\def\cprime{$'$}
\providecommand{\bysame}{\leavevmode\hbox to3em{\hrulefill}\thinspace}
\providecommand{\MR}{\relax\ifhmode\unskip\space\fi MR }
\providecommand{\MRhref}[2]{%
  \href{http://www.ams.org/mathscinet-getitem?mr=#1}{#2}
}
\providecommand{\href}[2]{#2}
\begin{thebibliography}{CEF12}

\bibitem[AO08]{AguiarOrellana2008}
Marcelo Aguiar and Rosa~C. Orellana, \emph{The {H}opf algebra of uniform block
  permutations}, J. Algebraic Combin. \textbf{28} (2008), no.~1, 115--138.

\bibitem[BDVO]{BDVO12}
Christopher Bowman, Maud De~Visscher, and Rosa Orellana, \emph{The partition
  algebra and the {K}ronecker coefficients}, Trans. Amer. Math. Soc. (to
  appear).

\bibitem[CEF12]{CEF12}
Thomas Church, Jordan Ellenberg, and Benson Farb, \emph{{FI}-modules: a new
  approach to stability for ${S}_n$-representations}, arXiv:1204.4533v2 (2012).

\bibitem[DF93]{DefantFloret1993}
Andreas Defant and Klaus Floret, \emph{Tensor norms and operator ideals},
  North-Holland Mathematics Studies, vol. 176, North-Holland Publishing Co.,
  Amsterdam, 1993.

\bibitem[Ges84]{Gessel84}
Ira~M. Gessel, \emph{Multipartite {$P$}-partitions and inner products of skew
  {S}chur functions}, Combinatorics and algebra ({B}oulder, {C}olo., 1983),
  Contemp. Math., vol.~34, Amer. Math. Soc., Providence, RI, 1984,
  pp.~289--317. \MR{777705 (86k:05007)}

\bibitem[HR05]{HalversonRam2005}
Tom Halverson and Arun Ram, \emph{Partition algebras}, European J. Combin.
  \textbf{26} (2005), no.~6, 869--921.

\bibitem[Jon94]{Jones94}
Vaughan F.~R. Jones, \emph{The {P}otts model and the symmetric group},
  Subfactors ({K}yuzeso, 1993), World Sci. Publ., River Edge, NJ, 1994,
  pp.~259--267. \MR{1317365 (97b:82023)}

\bibitem[Kir94]{Kirillov1994}
A.~A. Kirillov, \emph{Introduction to the theory of representations and
  noncommutative harmonic analysis}, Representation theory and noncommutative
  harmonic analysis, {I}, Encyclopaedia Math. Sci., vol.~22, Springer, Berlin,
  1994, pp.~1--156, 227--234.

\bibitem[KT34]{KoetheToeplitz1934}
G.~K\"{o}the and O.~Toeplitz, \emph{{L}ineare {R}\"{a}ume mit unendlich vielen
  {K}oordinaten und {R}inge unendlicher {M}atrizen}, J. reine angew. Math.
  \textbf{171} (1934), 193--226.

\bibitem[Mar91]{Martin91}
Paul Martin, \emph{Potts models and related problems in statistical mechanics},
  Series on Advances in Statistical Mechanics, vol.~5, World Scientific
  Publishing Co. Inc., Teaneck, NJ, 1991. \MR{1103994 (92m:82030)}

\bibitem[Mar94]{Martin94}
\bysame, \emph{Temperley-{L}ieb algebras for nonplanar statistical
  mechanics---the partition algebra construction}, J. Knot Theory Ramifications
  \textbf{3} (1994), no.~1, 51--82. \MR{1265453 (95a:82022)}

\bibitem[Mar96]{Martin96}
\bysame, \emph{The structure of the partition algebras}, J. Algebra
  \textbf{183} (1996), no.~2, 319--358. \MR{1399030 (98g:05152)}

\bibitem[Mar00]{Martin00}
\bysame, \emph{The partition algebra and the {P}otts model transfer matrix
  spectrum in high dimensions}, J. Phys. A \textbf{33} (2000), no.~19,
  3669--3695. \MR{1768036 (2001j:82025)}

\bibitem[MR95]{MalReu95}
Clauda Malvenuto and Christophe Reutenauer, \emph{Duality between
  quasi-symmetric functions and the {S}olomon descent algebra}, J. Algebra
  \textbf{177} (1995), no.~3, 967--982. \MR{1358493 (97d:05277)}

\bibitem[Oko97]{Okounkov1997}
Andrei Okounkov, \emph{On representations of the infinite symmetric group},
  Zap. Nauchn. Sem. S.-Peterburg. Otdel. Mat. Inst. Steklov. (POMI)
  \textbf{240} (1997), no.~Teor. Predst. Din. Sist. Komb. i Algoritm. Metody.
  2, 166--228, 294, English translation in J. Math. Sci. \textbf{96} (1999),
  no. 5, 3550--3589.

\bibitem[Pic88]{Pickrell1988}
Doug Pickrell, \emph{The separable representations of {${\rm U}(H)$}}, Proc.
  Amer. Math. Soc. \textbf{102} (1988), no.~2, 416--420.

\bibitem[SS13]{SamSnowden2013}
Steven~V.\ Sam and Andrew Snowden, \emph{Stability patterns in representation
  theory}, arXiv:1302.5859 (2013).

\bibitem[Tan97]{Tanabe1997}
Kenichiro Tanabe, \emph{On the centralizer algebra of the unitary reflection
  group {$G(m,p,n)$}}, Nagoya Math. J. \textbf{148} (1997), 113--126.

\bibitem[VT07]{TsilevichVershik2009}
A.~M. Vershik and N.~V. Tsilevich, \emph{Induced representations of the
  infinite symmetric group and their spectral theory}, Dokl. Akad. Nauk
  \textbf{412} (2007), no.~1, 7--10.

\end{thebibliography}

\end{document}